\documentclass{amsart}
\usepackage{graphicx} 
\usepackage{hyperref}
\usepackage{fullpage}
\usepackage{cancel}
\usepackage{caption}
\usepackage{subcaption}
\usepackage{mathtools}
\usepackage{placeins}
\usepackage{amssymb}
\usepackage[normalem]{ulem}
\usepackage{thmtools} 
\usepackage{thm-restate}

\usepackage{charter}
\usepackage{euler}
\usepackage[T1]{fontenc}

\newcommand{\squareicon}[1]{\tikz{\node[rectangle, fill=#1, inner sep=5pt] {};}}
\newcommand{\circleicon}[1]{\tikz{\node[circle, fill=#1, inner sep=4pt] {};}}

\newcommand{\stariconnew}[1]{\tikz{\node[star, star points=5, fill=#1, inner sep=3pt] {};}}

\usepackage{amsmath}
\usepackage{xcolor}
\usepackage{array} 
\usepackage{tikz}
\usetikzlibrary{tikzmark}
\usetikzlibrary{decorations.pathreplacing} 
\usetikzlibrary{shapes.geometric} 
\definecolor{myred}{RGB}{220, 20, 60}
\definecolor{myblue}{RGB}{65, 105, 225}
\definecolor{mypurple}{RGB}{148, 0, 211}
\definecolor{mygreen}{RGB}{50, 205, 50}
\definecolor{myorange}{RGB}{255, 165, 0}
\definecolor{mydarkgreen}{RGB}{0, 100, 0}%

\newtheorem{theorem}{Theorem}[section]
\newtheorem*{theorem-main}{Main Theorem}

\newtheorem{lemma}[theorem]{Lemma}

\newtheorem{definition}[theorem]{Definition}
\newtheorem{proposition}[theorem]{Proposition}
\newtheorem{example}[theorem]{Example}
\newtheorem{remark}[theorem]{Remark}

\newtheorem{question}[theorem]{Question}

\newcommand{\N}{\mathbb{N}}

\newcommand{\R}{\ensuremath{\mathbb{R}}}

\newcommand{\Z}{\ensuremath{\mathbb{Z}}}

\newcommand{\conv}{\mathrm{conv}}

\newcommand{\dis}{\mathrm{dis}}
\newcommand{\codis}{\mathrm{codis}}
\newcommand{\cov}{\mathrm{cov}}

\newcommand{\diam}{\mathrm{diam}}

\newcommand{\cech}[2]{\mathrm{\check{C}}(#1;#2)}

\newcommand{\gh}{\mathrm{GH}}

\newcommand{\h}{\mathrm{H}}

\newcommand{\graph}{\mathrm{graph}}
\newcommand{\half}{\mathrm{half}}

\title{Gromov--Hausdorff Distance Between Euclidean Unit Balls}

\author{Henry Adams}
\address{Department of Mathematics, University of Florida}
\email{henry.adams@ufl.edu}

\author{Kushagri Sharma}
\address{Department of Mathematics, University of Florida}
\email{kushagrisharma@ufl.edu}

\begin{document}

\begin{abstract}
What is the Gromov--Hausdorff distance between Euclidean unit balls of different dimensions, denoted by $d_\gh(B^m,B^n)$, for $m>n$?
Note that the lower bound coming from the stability of persistent homology is zero, since all balls possess identical (trivial) persistent homology.
To establish non-trivial lower bounds, we exploit the Borsuk--Ulam theorem.
For any $m > n \ge 1$, we prove that $d_\gh(B^m,B^n)\ge \tfrac{1}{1+\alpha_n} > \frac{1}{2}$, where we determine $\alpha_n$ explicitly.
We also prove that $d_\gh(B^m,B^n)\to 1$ as $m\to \infty$ and that $d_\gh(B^m,B^n)<1$ for all finite $m>n\geq 1$.
\end{abstract}

\maketitle


\section{Introduction}

The Gromov--Hausdorff distance~\cite{edwards1975structure,gromov1981groups, gromov1981structures,memoli2007use,tuzhilin2016invented} offers a framework for quantifying the dissimilarity between arbitrary metric spaces.
Its computation is a core problem in metric geometry and its applications, with significant effort devoted to finding effective computable lower and upper bounds.
In many cases, invariants from algebraic topology, such as those derived from persistent homology, provide non-trivial lower bounds via stability theorems~\cite{ChazalDeSilvaOudot2014,chazal2009gromov}.
For instance, this approach successfully distinguishes between spheres of different dimensions and gives positive lower bounds on the Gromov--Hausdorff distance $d_\gh(S^n,S^m)$ for $m>n\geq 0$.
For improved lower bounds on $d_\gh(S^n,S^m)$, a motivational problem for this paper, see Lim, M{\'e}moli, and Smith~\cite{lim2023gromov}, as well as~\cite{GH-BU-VR}.
The stability of persistent homology similarly distinguishes compact orientable surfaces of different genera, providing positive lower bounds on $d_\gh(M_g,M_{g'})$, where $M_g$ denotes a surface of genus $g$.
However, persistence may be insensitive to geometric differences between certain spaces, including convex spaces.
This limitation is illustrated by the following question, posed in Section~8 of~\cite{HvsGH}.

\begin{question}
\label{ques:main}
Let $B^n$ be the $n$-dimensional unit ball in $\R^n$, equipped with the Euclidean metric.
What is the Gromov--Hausdorff distance $d_\gh(B^m,B^n)$ for $m > n$?
\end{question}

Persistent homology fails to give a positive lower bound for $d_\gh(B^m,B^n)$ for $m > n$, since these convex spaces have the same trivial reduced persistent homology.
Moreover, diameter-based bounds are ineffective, as all unit balls of dimension $1$ or larger share the same diameter of $2$.


\newcounter{savedtheorem}
\setcounter{savedtheorem}{\value{theorem}}

\begingroup
  
\setcounter{theorem}{0}
\renewcommand{\thetheorem}{\arabic{theorem}}
  
\makeatletter
\@namedef{theproposition}{\thetheorem}
\@namedef{thecorollary}{\thetheorem}
\let\c@proposition\c@theorem
\let\c@corollary\c@theorem
\makeatother

\subsection*{Results overview}

Our investigation into $d_\gh(B^m,B^n)$, the Gromov--Hausdorff distance between Euclidean unit balls of different dimensions, begins by exploring a classic idea from topology: the `Invariance of Dimension' theorem, which states that $\R^m$ and $\R^n$ are not homeomorphic for $m\neq n$~\cite{munkres2000topology}.
Combined with ambient \v{C}ech complexes, this yields our first result: a dimension-independent lower bound on the distance:

\begin{restatable}{proposition}{constantlowerbound}
\label{prop:constant-lower-bound}
For every $m>n\ge 1$, we have $d_\gh(B^m,B^n) \ge \frac{1}{8}$.
\end{restatable}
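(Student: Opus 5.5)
We argue by contradiction. Suppose $d_\gh(B^m,B^n) < \frac18$, so it suffices to treat $m=n+1$: restricting a correspondence to the copy of $B^{n+1}$ inside $B^m$ only lowers distortion, with the image landing in $B^n$. Then there is a correspondence $R\subseteq B^{n+1}\times B^n$ with distortion $\dis(R) < \frac14$. Set $\varepsilon=\frac14$, so $\dis(R)<\varepsilon$. The plan has three steps: build a continuous map $f\colon B^{n+1}\to B^n$ that nearly preserves distances, build a continuous approximate inverse $g\colon B^n\to B^{n+1}$, and then show $g\circ f$ is close enough to the identity to restrict to a map on a sphere of degree one. That would give a continuous injection-like structure contradicting invariance of dimension (equivalently, Brouwer/no-retraction).

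\textbf{Step 1: continuous maps from the correspondence.} Choose a finite $\delta$-net $\{x_i\}$ of $B^{n+1}$ with small $\delta$. Pick $y_i$ with $(x_i,y_i)\in R$ and take a partition of unity $\{\varphi_i\}$ subordinate to the balls $B(x_i,\delta)$, which is the nerve of the ambient \v{C}ech cover. Define $f(x)=\sum_i\varphi_i(x)\,y_i$. This lands in $B^n$ by convexity. Whenever $\varphi_i(x)>0$ we have $|x-x_i|<\delta$, and all such $y_i$ lie within $\dis(R)+2\delta$ of one another. Hence $f(x)$ lies within $\dis(R)+2\delta$ of any $y$ with $(x,y)\in R$. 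Define $g\colon B^n\to B^{n+1}$ symmetrically. The convexity of both balls is exactly what makes these \v{C}ech-complex averaging maps well defined.

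\textbf{Step 2: estimates.} For $x\in B^{n+1}$, pick $(x,y)\in R$, so that $|f(x)-y|\lesssim\varepsilon$. Pick also $(x',f(x))\in R$, so that $|g(f(x))-x'|\lesssim\varepsilon$. Since $|f(x)-y|\lesssim\varepsilon$, the distortion bound gives $|x-x'|\lesssim 2\varepsilon$. Together these yield $|g(f(x))-x|\le C\varepsilon$ with $C\approx 4$ after tracking constants. With $\varepsilon=\frac14$ and the tight bookkeeping, the bound is strictly less than $1$. The target is $|g f(x)-x|<1$ for all $x$ on the unit sphere $S^n=\partial B^{n+1}$, with $\delta\to0$ absorbing the slack.

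\textbf{Step 3: topological contradiction.} Since $|gf(x)-x|<|x|=1$ on $S^n$, the straight-line homotopy $tx+(1-t)gf(x)$ avoids the origin. Therefore $gf|_{S^n}\colon S^n\to B^{n+1}\setminus\{0\}$ is homotopic to the inclusion, and so has degree one onto $S^n$ after radial projection. But $gf|_{S^n}$ factors through $f(S^n)\subseteq B^n$, which is contractible, so $gf|_{S^n}$ is null-homotopic in $B^{n+1}\setminus\{0\}$. Concretely, the null-homotopy is $g$ applied to a contraction of $f(S^n)$ in $B^n$. The catch is that this contraction must avoid $g^{-1}(0)$, and that is where the argument must be refined.

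The main obstacle is exactly this point: $g$ of a contraction may pass through $0$. To fix it, I would instead use a point $p\in B^n$ such that $g(p)$ is near $0$, compare it with $x\in S^n$, and reuse the distortion bound. Alternatively, and more robustly, I would replace the "factor through $B^n$" step with invariance of dimension. The map $f|_{S^n}\colon S^n\to\R^n$ has an approximate left inverse $g$, so it is close to injective. Borsuk--Ulam-style reasoning, or simply the non-existence of an essential map $S^n\to\R^n\setminus\{\text{pt}\}$ of the required kind, yields the contradiction. The constant $\frac18$ leaves a factor-of-$8$ budget: $2$ comes from passing between the correspondence and the distortion, and the remaining factor of $4$ from the compositions in Step 2. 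So I expect the constants to close without needing sharp estimates.
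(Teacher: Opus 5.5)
Your Steps 1 and 2 are sound. Careful bookkeeping even gives $|gf(x)-x|<3\dis(R)+2\delta$. The gap is Step 3, which is exactly where the dimension hypothesis must be used and which never reaches a contradiction.

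As you observe, factoring $gf|_{S^n}$ through the contractible ball $B^n$ proves nothing, because $g$ may vanish on $B^n$. The remedies you then list are only gestured at. The one you call simplest is false as stated: maps $S^n\to\R^n\setminus\{\mathrm{pt}\}\simeq S^{n-1}$ can be essential, e.g.\ the Hopf map for $n=3$. It also targets the wrong space, since what must be avoided is the arbitrary closed set $g^{-1}(0)$, not a point.

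The missing idea is to replace contractibility by the vanishing of top-degree homology of $n$-dimensional open sets. Work on $S^{m-1}=\partial B^m$, with $f\colon B^m\to B^n$ and $g\colon B^n\to B^m$ built as in your Step 1 directly from a correspondence between $B^m$ and $B^n$. Let $\rho\colon\R^n\to B^n$ be the nearest-point retraction and $U=(g\circ\rho)^{-1}(B^m\setminus\{0\})$. This is an open subset of $\R^n$ containing $f(S^{m-1})$, because $|gf(x)-x|<1$ forces $gf(x)\neq 0$. Thus $gf|_{S^{m-1}}$ factors through $U$, and $H_{m-1}(U)=0$ because $m-1\ge n$. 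But your straight-line homotopy shows that $gf|_{S^{m-1}}\colon S^{m-1}\to B^m\setminus\{0\}$ is an isomorphism on $H_{m-1}$, a contradiction.

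This is precisely the paper's mechanism. Its middle term $\cech{h(B^m\setminus cB^m)}{r+\varepsilon}$ is, by the nerve lemma, a union of balls in $B^n$ and so has trivial $H_{m-1}$. Restricting to the vertex set $h(B^m\setminus cB^m)$, which $g$ sends into the annulus $B^m\setminus(c-r)B^m$, plays the role of your $U$. Repaired this way, your continuous maps only need $3\dis(R)<1$, so they would even give $\frac16$ in place of $\frac18$; the paper loses the extra factor in the growth of the \v{C}ech radii. Your Borsuk--Ulam option also closes the argument in one line: a point with $f(x)=f(-x)$ yields $2=|x-(-x)|\le|x-gf(x)|+|gf(-x)+x|<2$. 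But that is the route of Theorem~\ref{thm:general}, and you did not carry it out.

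Separately, your reduction to $m=n+1$ is invalid. Restricting $R\subseteq B^m\times B^n$ to $B^{n+1}\times B^n$ keeps every point of $B^{n+1}$ matched. However, a point of $B^n$ may be matched only with points of $B^m\setminus B^{n+1}$, so the restriction need not be a correspondence, and $g\colon B^n\to B^{n+1}$ cannot be built ``symmetrically''. Were the reduction valid, it would prove $d_\gh(B^{n+1},B^n)\le d_\gh(B^m,B^n)$, a special case of the monotonicity the paper leaves as an open question.

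Your Step 3 as written genuinely relies on this reduction, since for $m>n+1$ the inclusion $S^n\hookrightarrow B^m\setminus\{0\}$ is null-homotopic. You could rescue it by composing $g\colon B^n\to B^m$ with the orthogonal projection onto $\R^{n+1}$, which does not increase $|gf(x)-x|$ for $x\in S^n$. It is simpler, though, to drop the reduction and argue on $S^{m-1}$ with $H_{m-1}$ as above, which is what the paper does.
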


While this first result confirms a positive distance, it is insensitive to the dimensions involved.
To develop a better bound, we employ the Borsuk--Ulam theorem, which states that every continuous function from an $n$-sphere into Euclidean $n$-space maps some pair of antipodal points to the same point~\cite{matousek2003using}.
This approach quantifies the metric distortion inherent in any function from a high-dimensional ball to a low-dimensional one, see also~\cite{dubins1981equidiscontinuity,lim2023gromov,GH-BU-VR,rodriguez2024gromov,leon2025}.
Our analysis gives an improved dimension-dependent lower bound:

\begin{restatable}{theorem}{maintheorem}
\label{thm:general}
For every $m>n\ge 1$, we have 
\[d_\gh(B^m,B^n) \ge \tfrac{1}{1+\alpha_n} =
\begin{cases}
\frac{\sqrt{n+2}}{\sqrt{n+2}+\sqrt n},&n\text{ even},\\
\frac{\sqrt{(n+1)(n+3)}}
{\sqrt{(n+1)(n+3)}+\sqrt{n^2+2n-1}},&n\text{ odd.}
\end{cases}\]
\end{restatable}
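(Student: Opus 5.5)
The plan is to convert a small-distortion correspondence into a continuous map from a sphere, apply Borsuk--Ulam, and then bound the resulting configuration with a finite-dimensional geometric lemma.

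\textbf{Setup.} Suppose $d_\gh(B^m,B^n)<r$. Then there is a correspondence, and hence a (not necessarily continuous) function $f\colon B^m\to B^n$, with distortion less than $2r$. It suffices to use $S^n\subset B^{n+1}\subseteq B^m$. Restricted to $S^n$, the map $f$ satisfies two estimates:
\begin{itemize}
\item $|f(x)-f(y)|\le |x-y|+2r$ for all $x,y$;
\item $|f(x)-f(-x)|\ge 2-2r$ for every $x$.
\end{itemize}
Borsuk--Ulam cannot be applied to $f$ directly, since $f$ need not be continuous.

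\textbf{Continuous approximation.} Fix a fine triangulation of $S^n$ of mesh $\varepsilon$. Define $F\colon S^n\to B^n$ by sending each vertex $v$ to $f(v)$ and extending linearly on simplices. This is legitimate because $B^n$ is convex. Borsuk--Ulam then gives a point $x$ with $F(x)=F(-x)=:p$.

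Let $A=\{f(v_i)\}$ be the images of the vertices of the simplex carrying $x$, and $B=\{f(w_j)\}$ the images of the vertices of the simplex carrying $-x$. These sets have the following properties:
\begin{itemize}
\item $p\in\conv A\cap\conv B$;
\item $\diam A,\ \diam B\le 2r+\varepsilon$, since the $v_i$ are pairwise $\varepsilon$-close;
\item every cross distance satisfies $|a-b|\ge 2-2r-2\varepsilon$, since each $w_j$ is within $2\varepsilon$ of $-v_i$ and $f$ nearly separates antipodes.
\end{itemize}
Using Carath\'eodory's theorem, or Radon's theorem in the form relevant to an intersection of two hulls, I will reduce to $A$ and $B$ being affinely independent with $|A|+|B|\le n+2$.

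\textbf{Key geometric lemma.} The core is the following statement in $\R^n$. If $A,B$ are finite sets with $\conv A\cap\conv B\neq\emptyset$ and $\max(\diam A,\diam B)\le D$, then
\[\min_{a\in A,\,b\in B}|a-b|\le \alpha_n D.\]
Granting this, the estimates above give $2-2r-2\varepsilon\le \alpha_n(2r+\varepsilon)$. Letting $\varepsilon\to0$ yields $r\ge \frac{1}{1+\alpha_n}$, which is the claimed bound.

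\textbf{Proving the lemma.} This is the main obstacle, and it is where the explicit $\alpha_n$ arises. I would maximize the minimum cross distance subject to the diameter constraint. I expect the extremal configuration to be a regular $n$-simplex (together with possibly one extra point) whose vertex set is split into $A$ and $B$, with $|A|=k$ and $|B|=n+2-k$ or so, and with $\conv A$ and $\conv B$ meeting at the center.

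In such a configuration, the ratio of cross distance to diameter should be governed by Jung-type circumradii of the two faces. Specifically, the distance from $p$ to $A$ is at most the circumradius $D\sqrt{(k-1)/(2k)}$, and similarly for $B$. Combining these through orthogonality of the two faces gives a cross distance bound depending on $k$. Optimizing over $k$ then yields a balanced split:
\begin{itemize}
\item for $n$ even, $k=\tfrac{n+2}{2}$ on both sides, giving $\alpha_n=\sqrt{n/(n+2)}$;
\item for $n$ odd, an unbalanced split $\tfrac{n+1}{2},\tfrac{n+3}{2}$, giving $\alpha_n=\sqrt{(n^2+2n-1)/((n+1)(n+3))}$.
\end{itemize}
Both match the stated formula.

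To prove that this configuration is actually extremal, I would argue via Jung's theorem applied to each side. I would also use the fact that the squared distances from $p$ to the two faces add up when the affine hulls are orthogonal, handling the non-orthogonal case by a Lagrange-multiplier or convexity argument.
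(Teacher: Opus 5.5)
Your outer argument is the paper's, and it is sound. You restrict a near-optimal $f$ to a sphere; $S^n\subset B^{n+1}\subseteq B^m$ works as well as the paper's $S^{m-1}$. Then you extend piecewise linearly, apply Borsuk--Ulam, and play the diameter bound $2r+\varepsilon$ against the cross-distance bound $2-2r-2\varepsilon$. This gives $r\ge 1/(1+\alpha_n)$ once the key lemma is known. The reduction to $|A|+|B|\le n+2$ is also true, but Radon's theorem goes the other way. Use instead Carath\'eodory in $\R^{n+1}$ on the lifted points $(a,1)$, $(-b,-1)$, or, as the paper does, an extreme point of $\conv A\cap\conv B$ with minimal supports.

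The genuine gap is the key lemma itself. The theorem needs only the \emph{upper} bound $\min_{a,b}|a-b|\le\alpha_n D$ for \emph{arbitrary} configurations. Exhibiting the extremal configuration proves only sharpness. The tools you name do not combine the two sides:
\begin{itemize}
\item Knowing that $p$ is within $R_A$ of some $a$ and within $R_B$ of some $b$ gives, by the triangle inequality, only $|a-b|\le R_A+R_B$. This is too weak for every $n\ge2$ and tends to $\sqrt2\,D$.
\item The Pythagorean bound $|a-b|^2\le R_A^2+R_B^2$ fails pointwise whenever $\langle a-p,b-p\rangle<0$.
\item ``Lagrange multipliers or convexity'' over all non-orthogonal configurations names a nonconvex max--min problem rather than an argument.
\end{itemize}

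The missing idea is to average over all cross pairs with the barycentric weights of the common point. Suppose $z=\sum_i a_ix_i=\sum_j b_jy_j$ with $a_i,b_j>0$ each summing to $1$. Then $\sum_i a_i(x_i-z)=0=\sum_j b_j(y_j-z)$ kills every cross term, so
\[\min_{i,j}\|x_i-y_j\|^2\le\sum_{i,j}a_ib_j\|x_i-y_j\|^2=\sum_i a_i\|x_i-z\|^2+\sum_j b_j\|y_j-z\|^2,\]
with no orthogonality assumption. Each term is at most $\frac{r}{2(r+1)}D^2$:
\begin{itemize}
\item The paper's route is $\sum_i a_i\|x_i-z\|^2=\frac12\sum_{i,k}a_ia_k\|x_i-x_k\|^2\le\frac{D^2}{2}\bigl(1-\sum_i a_i^2\bigr)$, followed by Cauchy--Schwarz.
\item Your Jung idea also works at this point, via $\sum_i a_i\|x_i-z\|^2=\sum_i a_i\|x_i-c\|^2-\|z-c\|^2\le R_A^2$, with $c$ the Jung center in the $r$-dimensional affine hull.
\end{itemize}
Concavity of $t\mapsto t/(t+1)$ under $r+s\le n$ then forces the balanced split.

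Your extremal picture is also slightly off. It is two regular simplices of dimensions $\lfloor n/2\rfloor$ and $\lceil n/2\rceil$ in orthogonal complementary subspaces with a common centroid. Equivalently, it is a regular $(n+1)$-simplex projected along the line through the centroids of two complementary faces. It is not a split regular $n$-simplex, whose complementary faces have disjoint hulls.
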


The quantity $\alpha_n$ is the supremum of the minimum distances between finite sets in $\R^n$ of diameter at most $1$ whose convex hulls intersect (see Definition~\ref{def:alphan}).
We note that this lower bound is strictly greater than $\frac{1}{2}$, and strictly decreases to $\frac{1}{2}$ as $n\rightarrow\infty$.

We can further relate the Gromov--Hausdorff distance to how efficiently a low-dimensional ball can be covered by a finite set of points. 
By considering the \emph{covering radius} $\cov_{B^n}(m)$ of the low-dimensional ball $B^n$ by $m$ points, we obtain a lower bound that depends explicitly on both dimensions $m$ and $n$.
This strategy follows~\cite[Lemma~5.10]{colding1996large} and~\cite{funano2008estimates,lim2023gromov}.
Notably, this perspective allows us to capture the asymptotic behavior of the distance: for fixed $n$, $\cov_{B^n}(m)\rightarrow 0$ as $m\rightarrow \infty$, driving the lower bound towards $1$.

\begin{restatable}{proposition}{covering}
\label{prop:covering}
For any $m>n\geq 1$, we have $d_\gh(B^m,B^n)\geq 1-\cov_{B^n}(m)$. 
\end{restatable}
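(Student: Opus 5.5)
Throughout, $d_\gh(X,Y)=\tfrac12\inf_R \dis(R)$, and $\cov_{B^n}(m)$ denotes the smallest $r$ such that $B^n$ is covered by $m$ closed balls of radius $r$ centred at points of $B^n$. The plan is to find, in the big ball $B^m$, a finite configuration of points that are pairwise far apart and whose number exceeds what fits without collision in $B^n$. The natural choice is the $m+1$ vertices $v_0,\dots,v_m$ of a regular simplex inscribed in the unit sphere $S^{m-1}\subset B^m$. These satisfy $\|v_i-v_j\|=\sqrt{2+2/m}$ for $i\neq j$.

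That value is not quite $2$, which suggests using antipodal pairs instead. The set $\{\pm e_1,\dots,\pm e_m\}$ has $2m$ points, with distance $2$ within each antipodal pair and $\sqrt2$ otherwise. So the cleaner plan is to pigeonhole against the covering radius as follows.

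Suppose $d_\gh(B^m,B^n)<\tfrac12\eta$. Then there is a map $f\colon B^m\to B^n$ with distortion $<\eta$, taken from a correspondence $R$. Cover $B^n$ by $m$ balls of radius $r=\cov_{B^n}(m)$, centred at $c_1,\dots,c_m$. Consider the $m+1$ simplex vertices $v_0,\dots,v_m$. Their images fall into $m$ balls, so two of them, say $f(v_i)$ and $f(v_j)$, lie in the same ball. Then $\|f(v_i)-f(v_j)\|\le 2r$, and hence
\[\|v_i-v_j\|<2r+\eta.\]
This gives $\eta> \sqrt{2+2/m}-2r$, i.e.\ $d_\gh\ge \tfrac12\sqrt{2+2/m}-r$. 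That is weaker than the claim, since $\tfrac12\sqrt{2+2/m}<1$.

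To get $1-r$ we need a pair at distance $2$, so we should exploit antipodes. Here is the fix. Consider the $m$ points $f(e_1),\dots,f(e_m)$ together with $f(-e_1),\dots,f(-e_m)$, and define $g\colon S^{m-1}\to\R^n$ by $g(x)=f(x)$. This is not continuous, so Borsuk--Ulam does not apply directly. Instead, pigeonhole directly on antipodal pairs. For each $k$, let $a(k)$ be the index of a ball containing $f(e_k)$. If some ball contains both $f(e_k)$ and $f(-e_k)$, then
\[2=\|e_k-(-e_k)\|<2r+\eta,\]
so $\eta>2-2r$, giving $d_\gh\ge 1-r$ as desired.

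The main obstacle is forcing such a collision with only $m$ balls. Plain pigeonhole on $2m$ points and $m$ balls does not force an antipodal pair to share a ball. My plan is to use the Lusternik--Schnirelmann (Borsuk--Ulam) covering theorem, which says that any cover of $S^{m-1}$ by $m$ closed sets has a set containing an antipodal pair. Pull back the cover of $B^n$: set $A_i=\{x\in S^{m-1}: f(x)\in \bar B(c_i,r)\}$. These $m$ sets cover $S^{m-1}$, but they need not be closed since $f$ is arbitrary. To handle this, I will replace each $A_i$ by its closure, which keeps the covering property. The LS theorem then gives $x$ with $x,-x\in\overline{A_i}$ for some $i$. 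Choose $y\in A_i$ near $x$ and $y'\in A_i$ near $-x$. Then $\|y-y'\|$ is arbitrarily close to $2$ while $\|f(y)-f(y')\|\le 2r$. Therefore $2-\epsilon<2r+\eta$ for every $\epsilon>0$, which gives $\eta\ge 2-2r$. Taking the infimum over correspondences yields $d_\gh(B^m,B^n)\ge 1-\cov_{B^n}(m)$. The hypothesis $m>n$ is not needed in this argument; it just makes the bound nontrivial.
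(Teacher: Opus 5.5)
Your proof is correct and is essentially the paper's argument: the paper applies Lyusternik--Schnirelmann to the closures of the fibers $C(p)$ of a correspondence between $S^{m-1}$ and an $m$-point set $P\subseteq B^n$ to get $d_\gh(B^m,P)\ge 1$ (Lemma~\ref{lemma:P}), and then uses $d_\gh(B^m,B^n)\ge d_\gh(B^m,P)-d_\h(P,B^n)$. Your direct pullback of the $m$ covering balls through $f$, with the bound $\|f(y)-f(y')\|\le 2r$, is that triangle-inequality step unfolded, so the two differ only in packaging.
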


As illustrated in Figure~\ref{fig:Prop4vsThrm2}, Theorem~\ref{thm:general} provides better lower bounds on $d_\gh(B^m,B^n)$ than Proposition~\ref{prop:covering} for $m$ up to 2 when $n=1$, for $m$ up to 8 when $n=2$, for $m$ up to at least 24 when $n=3$, for $m$ up to at least 48 when $n=4$, and for $m$ up to at least 113 when $n=5$.
Only beyond these transition points, as $m$ approaches the asymptotic regime, does Proposition~\ref{prop:covering} meet or overtake Theorem~\ref{thm:general}.

\begin{figure}[hbt]
    \centering
    \includegraphics[width=0.6\linewidth]{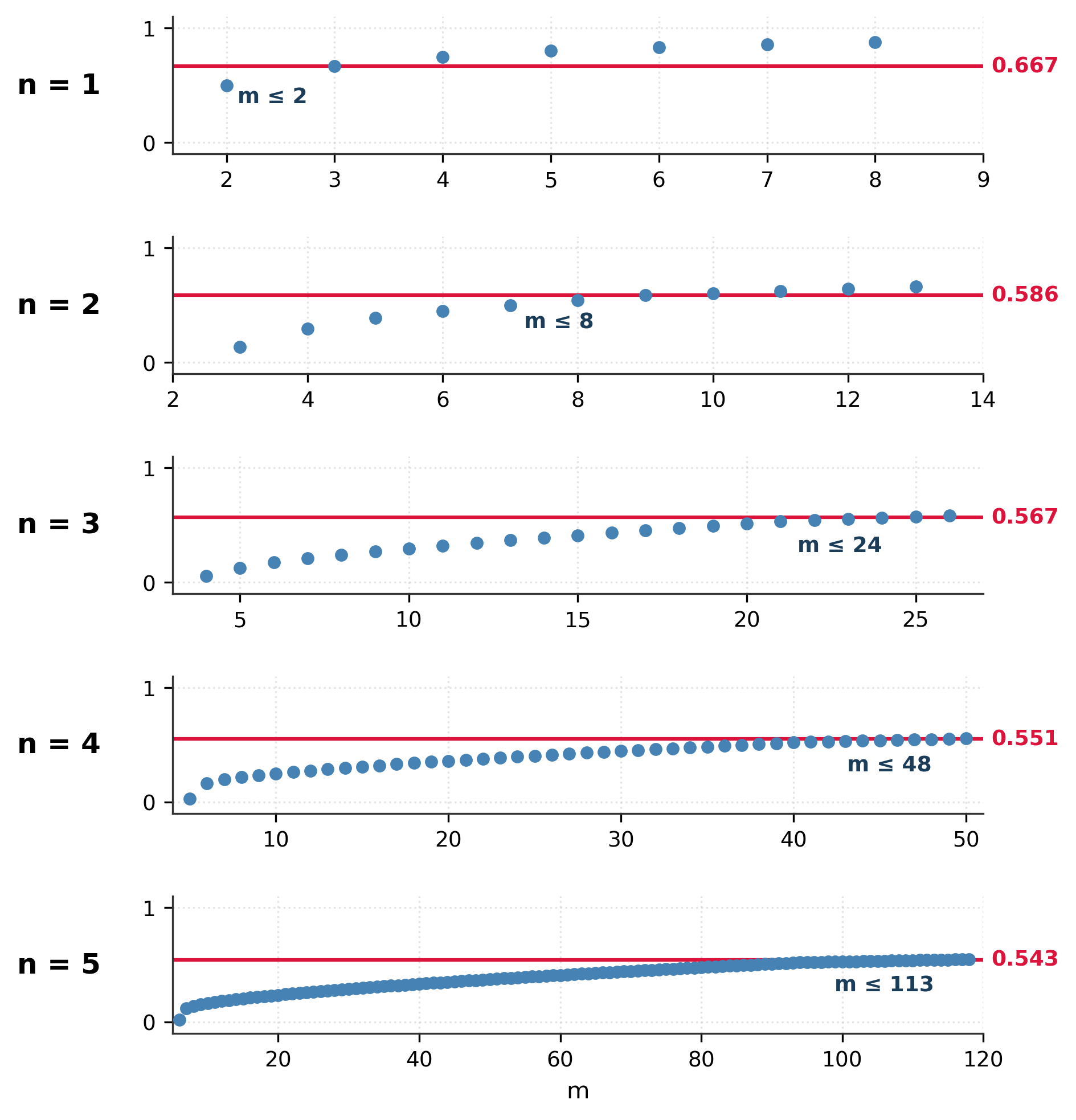}
    \caption{Plots comparing lower bounds on $d_\gh(B^m,B^n)$ from Theorem~\ref{thm:general} (horizontal red lines at $\tfrac{1}{1+\alpha_n}$) and Proposition~\ref{prop:covering} (blue dots) for dimensions $n \in \{1, 2, 3, 4, 5\}$.
    The annotated thresholds ($m\le 2$, $m\le 8$, $m\le 24$, \ldots) mark the regime where Theorem~\ref{thm:general} is known to improve upon Proposition~\ref{prop:covering}.}
    \label{fig:Prop4vsThrm2}
\end{figure}

Our narrative then shifts to the other side of the problem: the upper bounds on $d_\gh(B^m,B^n)$ for $m>n\geq 1$.
While the natural isometric embedding of the low-dimensional ball into the high-dimensional ball gives $d_\gh(B^m,B^n)\le 1$, we show that this bound is never attained in finite dimensions.
In Section~\ref{sec:upperbounds}, we prove that the Gromov--Hausdorff distance between Euclidean unit balls is always strictly less than $1$.
We do this by adapting ideas from~\cite{lim2023gromov}, constructing an odd, continuous surjective map from the low-dimensional ball to the high-dimensional one.

\begin{restatable}{theorem}{upperboundlessthanone}
\label{thrm:upperbound}
For $m>n\geq 1$, we have $d_\gh(B^m,B^n)<1$.
\end{restatable}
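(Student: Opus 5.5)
The plan is to exhibit a single correspondence between $B^n$ and $B^m$ whose distortion is strictly less than $2$. Since $d_\gh(X,Y)=\tfrac12\inf_R \dis(R)$, this gives the theorem directly. The correspondence will be the graph of a continuous surjection $F\colon B^n\to B^m$ with two properties: $F$ is odd, i.e.\ $F(-x)=-F(x)$, and $F(x)\neq 0$ for every $x\in S^{n-1}=\partial B^n$. Surjectivity makes $R=\{(x,F(x))\}$ a correspondence, and its distortion is
\[
\dis(R)=\sup_{x,x'\in B^n}\bigl|\,|x-x'|-|F(x)-F(x')|\,\bigr|.
\]

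\textbf{Why these properties suffice.} The function inside the supremum is continuous on the compact set $B^n\times B^n$, so the supremum is attained. Both distances lie in $[0,2]$, so the value $2$ could only occur in two ways.
\begin{itemize}
\item If $|F(x)-F(x')|=2$ and $|x-x'|=0$, then $x=x'$, which forces $F(x)=F(x')$, a contradiction.
\item If $|x-x'|=2$ and $F(x)=F(x')$, then $x\in S^{n-1}$ and $x'=-x$. Oddness gives $F(x)=F(-x)=-F(x)$, so $F(x)=0$, contradicting the boundary condition.
\end{itemize}
Hence $\dis(R)<2$, and therefore $d_\gh(B^m,B^n)<1$.

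\textbf{Step 1: an odd surjection onto $B^m$.} Take a Peano-type continuous surjection $\varphi\colon[0,1]\to B^m$ with $\varphi(0)=0$; this is possible since $B^m$ is a Peano continuum and the starting point can be chosen. Extend it to $[-1,1]$ oddly by $f(t)=\varphi(t)$ for $t\ge0$ and $f(t)=-\varphi(-t)$ for $t\le 0$. Then define $G\colon B^n\to B^m$ by $G(x)=f(x_1)$. This map is odd, continuous, and surjective, because $x\mapsto x_1$ maps $B^n$ onto $[-1,1]$. It may, however, vanish on the boundary sphere.

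\textbf{Step 2: correct the boundary by interpolation.} Write $\iota(x)=(x,0)\in B^m$ for the standard inclusion. Define
\[
F(x)=\begin{cases} G(2x), & |x|\le \tfrac12,\\[2pt] (2-2|x|)\,G\!\left(\tfrac{x}{|x|}\right)+(2|x|-1)\,\iota(x), & \tfrac12\le |x|\le 1.\end{cases}
\]
This $F$ has all the required properties.
\begin{itemize}
\item \emph{Continuity:} at $|x|=\tfrac12$ both formulas give $G(2x)=G(x/|x|)$.
\item \emph{Oddness:} each piece is odd.
\item \emph{Range:} on the annulus, $F(x)$ is a convex combination of points of $B^m$, so $F$ maps into $B^m$.
\item \emph{Surjectivity:} already the inner ball $\{|x|\le\tfrac12\}$ maps onto $B^m$.
\item \emph{Boundary condition:} for $|x|=1$ we get $F(x)=(x,0)$, which has norm $1$ and so is nonzero.
\end{itemize}

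The only genuinely nontrivial ingredient is the space-filling surjection in Step~1, which is classical (Hahn--Mazurkiewicz). The rest consists of routine checks, plus the compactness argument above that turns the pointwise strict inequality into a strict bound on the supremum.
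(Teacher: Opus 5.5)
Your proposal is correct and takes essentially the paper's approach: you use the graph of a continuous odd surjection $B^n\to B^m$, built from a space-filling curve and corrected on an outer annulus so that it equals the inclusion on $S^{n-1}$, and then a compactness and antipodality argument shows the distortion is below $2$. The differences are only streamlining: your straight-line interpolation inside the convex set $B^m$ makes explicit the paper's abstract homotopy-plus-odd-averaging step, and your observation that the value $2$ can only arise from $x=x'$ or $x'=-x\in S^{n-1}$ is a slightly cleaner version of the paper's case split.
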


\endgroup

\setcounter{theorem}{\value{savedtheorem}}

Figure~\ref{fig:ball_matrix} shows a summary of some of these results in matrix form.

\begin{figure}[thbp] 
\centering
\hspace*{-6mm}
\setlength{\arraycolsep}{8pt} 
\renewcommand{\arraystretch}{1.8} 
\begin{tabular}{c c}
\(
\left[
\begin{array}{c|c|c|c|c|c|c|c|c|c}
    \hline
      & B^0 & B^1 & B^2 & B^3 & B^4 & \dots & \tikzmarknode{brace-start}{B^{m}} & \dots & \tikzmarknode{brace-end}{B^{\infty}} \\
    \hline
    B^0 & \circleicon{mygreen} & \squareicon{myorange} & \squareicon{myorange} & \squareicon{myorange} & \squareicon{myorange} & \squareicon{myorange} & \squareicon{myorange} & \squareicon{myorange} & \squareicon{myorange} \\
    \hline
    B^1 &  & \circleicon{mygreen} & \stariconnew{mypurple} & \stariconnew{mypurple} &\stariconnew{mypurple} &\stariconnew{mypurple} & \stariconnew{mypurple}&\stariconnew{mypurple} & \squareicon{myorange} \\
    \hline
    B^2 & & & \circleicon{mygreen} & \stariconnew{mypurple} & \stariconnew{mypurple} & \stariconnew{mypurple}& \stariconnew{mypurple} & \stariconnew{mypurple}& \squareicon{myorange} \\
    \hline
    B^3 & & & & \circleicon{mygreen} & \stariconnew{mypurple} &\stariconnew{mypurple} &\stariconnew{mypurple} & \stariconnew{mypurple} & \squareicon{myorange} \\
    \hline
    B^4 & & & & & \circleicon{mygreen} & \stariconnew{mypurple} &\stariconnew{mypurple}& \stariconnew{mypurple}& \squareicon{myorange} \\
    \hline
    \vdots & & & & & & \circleicon{mygreen} & \stariconnew{mypurple}&\stariconnew{mypurple} & \squareicon{myorange} \\
    \hline
    B^{n} & & & & & & & \circleicon{mygreen} & \stariconnew{mypurple}& \squareicon{myorange} \\
    \hline
    \vdots & & & & & & & &   \circleicon{mygreen} & \squareicon{myorange} \\
    \hline 
    B^\infty & & & & & & & & & \circleicon{mygreen} \\
    \hline
\end{array}
\right]
\)
&

\hspace{2em}

\begin{tabular}{l l}
\squareicon{myorange}   & 1 \\
\stariconnew{mypurple}  & $\big[\max\big\{\tfrac{1}{1+\alpha_n}, 1-\cov_{B^n}(m)\big\},1\big)$ \\
\circleicon{mygreen}    & $0$ \\
\end{tabular}

\end{tabular}

\caption{
Matrix of Gromov--Hausdorff distances $d_\gh(B^m,B^n)$: each entry gives either the exact value or an interval containing it.
For an entry in row $B^n$ and column $B^m$ with $m>n$, the $n$ in the interval formula denotes the smaller dimension.
}
\label{fig:ball_matrix} 
\end{figure}

Our paper is structured as follows.
In Section~\ref{sec:preliminaries}, we provide preliminary definitions, notation, and lemmas.
Section~\ref{sec:invOFdim} adapts the classic topological proof of `Invariance of Dimension', which shows that the Euclidean spaces $\R^n$ and $\R^m$ cannot be homeomorphic for $m>n$, to lower bound $d_\gh(B^m,B^n)$.
Section~\ref{secBorsukUlam} employs the Borsuk--Ulam theorem to prove Theorem~\ref{thm:general} and the covering radius to prove Proposition~\ref{prop:covering}.
Section~\ref{sec:upperbounds} explores upper bounds on the Gromov--Hausdorff distance between Euclidean balls of different dimensions.
To conclude in Section~\ref{sec:conclusion}, we list some open questions.
Appendix~\ref{app:proofoflemmageneral} contains the proof of the lemma giving the exact value of $\alpha_n$, Appendix~\ref{app:oddsctssurjective} constructs the surjections needed to prove Theorem~\ref{thrm:upperbound}, and Appendix~\ref{sec:balls-curvature-sets} compares bounds obtained from curvature sets.

\subsection*{AI disclosure}
We used ChatGPT-5.6 Sol in Appendix~\ref{app:proofoflemmageneral} to prove Lemma~\ref{lem:algebraic-general}.
We take responsibility for all of the mathematics in the paper.

\subsection*{Acknowledgements}
This research was funded by the NSF CAREER Grant DMS 2540172.
The first author would also like to thank the Simons Foundation's Travel Support for Mathematicians.

\section{Preliminaries, notation, and background lemmas}
\label{sec:preliminaries}

We begin with preliminaries on Hausdorff and Gromov--Hausdorff distances, distortion and codistortion, correspondences, covering radii, \v{C}ech simplicial complexes, and triangulations.

\subsection{Metric spaces}

Let $(X,d)$ be a metric space.
For any $x\in X$ and $r>0$, we let $B(x,r)=\{x' \in X \mid d(x,x')<r\}$ denote the open metric ball of radius $r$ centered at $x$.
For $X'\subseteq X$, we let $B(X',r)=\bigcup\limits_{x\in X'}B(x,r)$ be the $r$-neighborhood of $X'$.

The \emph{diameter} of a subset $A\subseteq X$ is defined as the supremum of all pairwise distances between points in $A$:
\[\diam(A)=\sup\{d(x,x')\mid x,x'\in A\}.\]
If $A$ is compact, its diameter is finite and the supremum is attained.

Let $d(a,B)=\inf\limits_{b\in B}d(a,b)$ denote the distance from a point $a\in X$ to a subset $B\subseteq X$.
The distance between two non-empty subsets $A,B\subseteq X$ is given by the infimum of pairwise distances between their elements:
\[d(A,B)=\inf\{d(a,b)\mid a\in A, b\in B\}.\]
Note that if $A$ and $B$ intersect ($A\cap B\neq \emptyset$), then $d(A,B)=0$.

Given $\varepsilon>0$, an \emph{$\varepsilon$-net} for a metric space $X$ is a subset $A\subseteq X$ such that every point in $X$ lies within distance $\varepsilon$ of $A$; that is, for every $x\in X$, there exists $a\in A$ with $d_X(x,a)\leq \varepsilon$.

For a set $X\subseteq \R^n$, we denote its convex hull by $\conv(X)$, the smallest convex set containing $X$.

Our analysis relies on Jung's theorem, which bounds the radius of the smallest ball enclosing any bounded set in Euclidean space:

\begin{theorem}[Jung's Theorem~\cite{Jung1901}]
\label{thm:JungsTheorem}
Any bounded set $K\subseteq \R^n$ has the smallest enclosing ball of radius $R_K$ satisfying $R_K \leq \diam(K) \sqrt{\tfrac{n}{2(n+1)}}$.
Furthermore, equality is attained when $K$ is the vertex set of a regular $n$-simplex.
\end{theorem}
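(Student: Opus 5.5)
The plan is to first reduce to sets of at most $n+1$ points using Helly's theorem, and then handle that finite case with an averaging identity at the center of the minimal enclosing ball. Set $D=\diam(K)$ and $r=D\sqrt{\tfrac{n}{2(n+1)}}$. Throughout, ``ball'' means a closed ball; the closure of $K$ has the same diameter, so we may assume $K$ is compact. For each $x\in K$ consider $\bar B(x,r)$, which is compact and convex. A point lying in all of these balls is exactly the center of a ball of radius $r$ containing $K$. By Helly's theorem for a (possibly infinite) family of compact convex sets in $\R^n$, it suffices to show that any $n+1$ of these balls share a point. That is, it suffices to prove the bound for a finite set $\{x_1,\dots,x_k\}$ with $k\le n+1$ and diameter at most $D$.

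For such a finite set, a smallest enclosing ball exists, say with center $c$ and radius $R$. This follows by compactness, since the function $y\mapsto\max_i|y-x_i|$ is continuous and coercive. Let $I$ be the set of indices with $|x_i-c|=R$. The key claim is that $c\in\conv\{x_i: i\in I\}$. If not, some hyperplane strictly separates $c$ from this hull. Moving $c$ slightly toward the hull then strictly decreases every distance $|x_i-c|$ with $i\in I$, while the distances for $i\notin I$ remain below $R$. This contradicts minimality. I expect this contact-point step to be the only delicate point, and the separation argument above is how I would settle it.

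Translate so that $c=0$ and write $0=\sum_{i\in I}\lambda_i x_i$ with $\lambda_i\ge 0$ and $\sum\lambda_i=1$. Since $|x_i|=R$ for $i\in I$, we get $|x_i-x_j|^2=2R^2-2\,x_i\cdot x_j$. Summing with weights gives
\[
\sum_{i,j\in I}\lambda_i\lambda_j|x_i-x_j|^2 \;=\; 2R^2-2\Bigl|\sum_i\lambda_i x_i\Bigr|^2 \;=\; 2R^2 .
\]
On the other hand, the diagonal terms vanish and each off-diagonal term is at most $D^2$. With $|I|\le k\le n+1$ and Cauchy--Schwarz ($\sum\lambda_i^2\ge 1/|I|$), this yields
\[
2R^2\le D^2\Bigl(1-\sum_i\lambda_i^2\Bigr)\le D^2\Bigl(1-\tfrac{1}{n+1}\Bigr).
\]
Hence $R^2\le D^2\,\tfrac{n}{2(n+1)}$, which gives $R\le r$ and completes the reduction.

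For equality, take the vertices of a regular $n$-simplex with edge length $D$. Realize it as the points $e_1,\dots,e_{n+1}$ in the hyperplane $\sum y_i=1$ of $\R^{n+1}$, scaled by $D/\sqrt2$. The circumcenter is the barycenter, and the circumradius is $\tfrac{D}{\sqrt2}\sqrt{1-\tfrac{1}{n+1}}=r$. The smallest enclosing ball cannot be smaller: in the computation above, any enclosing ball must have the barycenter in the hull of its contact points. Alternatively, equality in the averaging estimate forces all $\lambda_i=1/(n+1)$, so the center is the barycenter.
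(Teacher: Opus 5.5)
The paper does not prove this statement. Jung's theorem is quoted as background from~\cite{Jung1901}, so there is no in-paper argument to compare against. Your proof of the inequality $R_K\le r$ is correct and is the standard one: Helly's theorem applied to the compact balls $\bar B(x,r)$ for $x\in K$, the contact-point lemma via strict separation, and the weighted averaging identity at the center. That averaging step, together with $\sum_{i\neq j}\lambda_i\lambda_j\le 1-\frac{1}{|I|}$, is the same computation the paper itself uses in Appendix~\ref{app:proofoflemmageneral} to bound $\alpha_n$.

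The gap is in the equality clause. Computing the circumradius of the regular simplex only exhibits an enclosing ball of radius $r$, which the first part already guarantees. What must be shown is that no ball of radius less than $r$ contains the vertices $v_1,\dots,v_{n+1}$, and neither of your justifications does this.

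The assertion that ``any enclosing ball must have the barycenter in the hull of its contact points'' is false for general enclosing balls. For the minimal ball, the contact-point lemma only places its center $c$ in $\conv\{v_i:i\in I\}$ for some unknown index set $I$; it does not identify $c$ with the barycenter.

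Your alternative is circular. For the regular simplex your computation becomes $2R^2=D^2\bigl(1-\sum_{i\in I}\lambda_i^2\bigr)$. Cauchy--Schwarz bounds $\sum_{i\in I}\lambda_i^2$ from below, so this yields only $R\le r$. Nothing forces $|I|=n+1$ and $\lambda_i=\frac{1}{n+1}$ unless you already know $R=r$.

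A direct fix: let $b$ be the barycenter, so that $|v_i-b|=r$ for all $i$ and $\sum_i(v_i-b)=0$. Then for every $y$,
\[
\max_i|v_i-y|^2\;\ge\;\frac{1}{n+1}\sum_{i=1}^{n+1}|v_i-y|^2\;=\;r^2+|y-b|^2\;\ge\;r^2 .
\]
Alternatively, you can stay within your framework. With $c=\sum_{i\in I}\lambda_i v_i$ and $|v_i-c|=R$ for $i\in I$, one has $\sum_{i\in I}\lambda_i|v_j-v_i|^2=|v_j-c|^2+R^2$ for every vertex $v_j$.
\begin{itemize}
\item For $j\notin I$ this gives $|v_j-c|^2=D^2-R^2>R^2$, since $R^2\le r^2<D^2/2$. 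That contradicts $v_j$ lying in the ball, so every vertex is a contact point.
\item For $j\in I$ it then gives $D^2(1-\lambda_j)=2R^2$. Hence the weights are all equal to $\frac{1}{n+1}$, and $R=r$.
\end{itemize}
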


\subsection{Euclidean unit balls}

For any finite dimension $n\ge 0$, we consider the $n$-dimensional closed unit ball:
\[B^n\coloneqq \{(x_1,\ldots,x_n)\in \R^n \mid x_1^2+\ldots+x_n^2\leq 1\}.\]
We view $B^n$ as a metric space equipped with the standard Euclidean distance: for any two points $x,x'\in B^n$,
\[d(x,x')\coloneqq \left(\sum\limits_{i=1}^{n}(x_i-x'_i)^2\right)^{\frac{1}{2}}.\]
For $n=0$, $B^0$ consists of a single point.

For the infinite-dimensional case, we let $B^\infty$ denote the unit ball in the Hilbert space $\ell^2$, defined as $B^\infty\coloneqq \{(x_i)_{i=1}^{\infty}\in \ell^2\mid \sum\limits_{i=1}^{\infty}x
_i^{2}\leq 1\}$, which we endow with the standard $\ell^2$ metric $d(x,x')\coloneqq \left(\sum_{i=1}^{\infty}(x_i-x'_i)^2\right)^{\frac{1}{2}}$.

For $n\in \N$ we denote the $(n-1)$-dimensional unit sphere by $S^{n-1}\coloneqq \{(x_1,\ldots,x_n)\in \R^n \mid x_1^2+\ldots+x_n^2= 1\}$.
We denote the infinite-dimensional unit sphere by $S^{\infty}\coloneqq \{(x_i)_{i=1}^{\infty}\in \ell^2\mid \sum\limits_{i=1}^{\infty}x
_i^{2}= 1\}$.

\subsection{Hausdorff and Gromov--Hausdorff distances}
\label{H&GH}

\begin{definition}[Hausdorff Distance]
\label{def:hausdorff}
Let $M$ be a metric space.
For each pair of non-empty subsets $X\subseteq M$ and $Y\subseteq M$, the \emph{Hausdorff distance} between $X$ and $Y$ is defined as: 
\[d_\h(X,Y)=\max\left\{\sup\limits_{x\in X}d(x,Y), \sup\limits_{y\in Y}d(y,X)\right\}.\]
\end{definition}
When $X$ and $Y$ are compact, $d_\h(X,Y)$ represents the farthest distance any point in $X$ can be from $Y$, or vice versa, whichever is greater.
The Hausdorff distance can be infinite as well.
Figure~\ref{fig:hdistance} illustrates the Hausdorff distance on an example.

\begin{figure}[htb]
\centering
\includegraphics[width=0.5\textwidth]{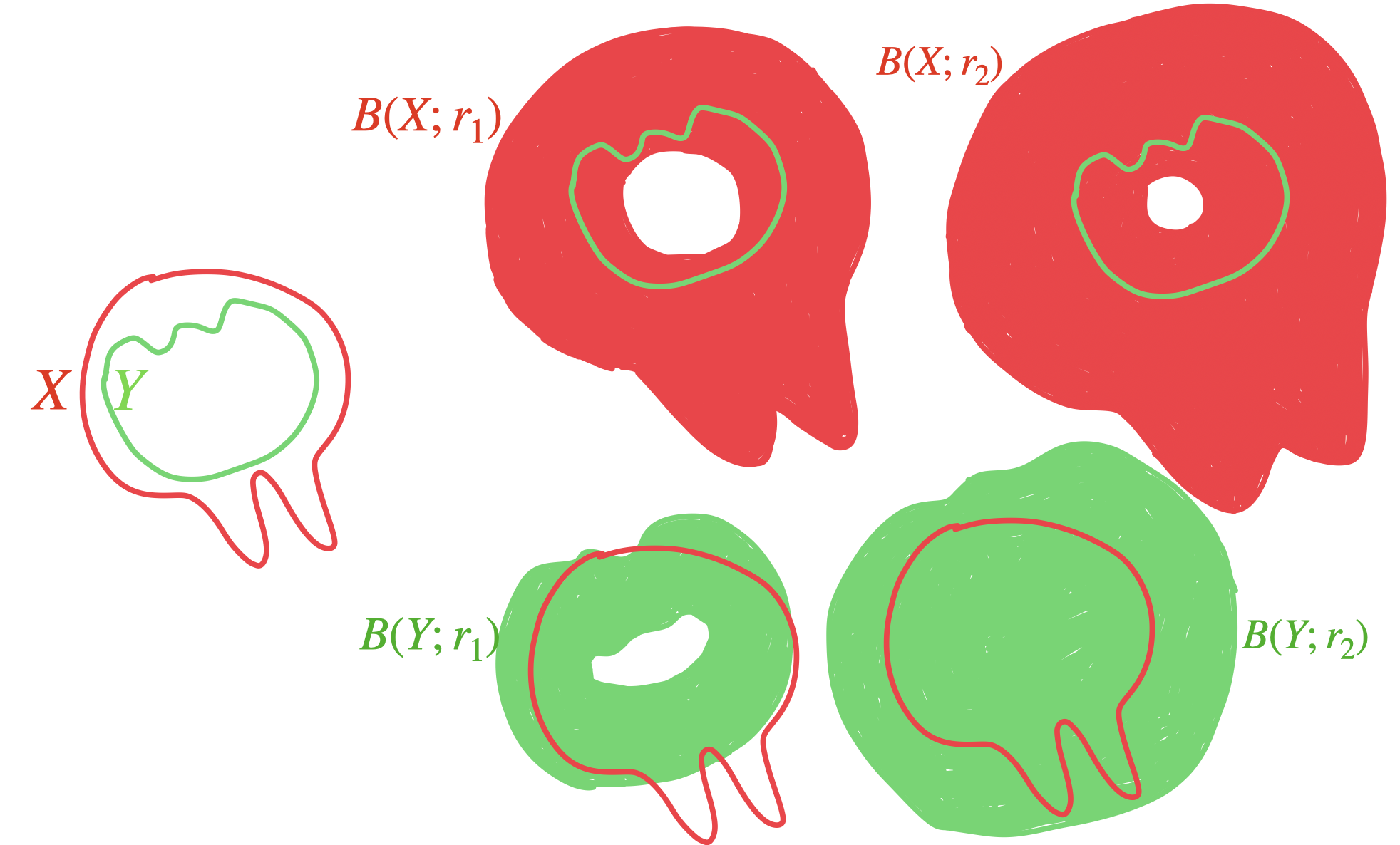}
\caption{
Metric spaces $X$ (red) and $Y$ (green) inherit the Euclidean metric
from the plane.
Observe that $Y\subseteq B(X,r_1)$ but $X \not\subseteq B(Y,r_1)$, so $d_\h(X,Y)\geq r_1$.
However, $X\subseteq B(Y,r_2)$ and $ Y\subseteq B(X,r_2)$, so  $d_\h(X,Y)\leq r_2$.}
\label{fig:hdistance}
\end{figure}

The covering radius of a metric space can be defined in terms of the Hausdorff distance.

\begin{definition}[Covering radius]
For any integer $m\geq 1$ and compact metric space $X$, the \emph{$m$-th covering radius of $X$} is defined as:
\begin{equation}
\label{eq:defcoveringradius}
\cov_X(m)=\inf\{d_\h(P,X)\mid \emptyset \neq P\subseteq X \text{ such that } |P|\leq m\}.
\end{equation}
\end{definition}

Unlike the Hausdorff distance, the Gromov--Hausdorff distance
compares metric spaces $X$
and $Y$ that are not necessarily subsets of the same metric space.

\begin{definition}[Gromov--Hausdorff distance~\cite{edwards1975structure,gromov1981groups, gromov1981structures,tuzhilin2016invented}]
\label{GHdef}
The \emph{Gromov--Hausdorff distance} $d_\gh(X,Y)$ between two metric spaces $X$ and $Y$ is defined as the infimum, over all metric spaces $Z$ and isometric embeddings $\phi\colon X\rightarrow Z$ and $\psi\colon Y\rightarrow Z$, of the Hausdorff distance in $Z$ between $\phi(X)$ and $\psi(Y)$.
\end{definition}

\begin{figure}[htb]
\centering
\includegraphics[width=0.17\textwidth]{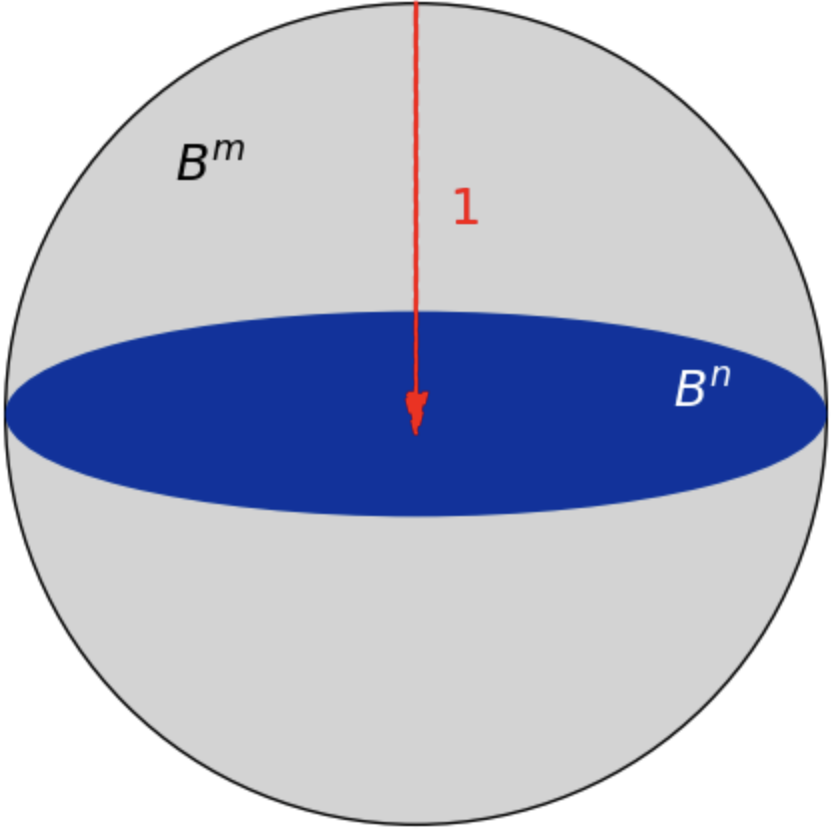}
\caption{An isometric embedding of $B^n \hookrightarrow B^m$ for $m>n$ given by $(x_1,x_2, \ldots, x_n)\mapsto (x_1,x_2, \ldots, x_n,0,\ldots,0)$.
This embedding gives $d_\gh(B^m,B^n)\leq 1$.
}

\end{figure}
It is known that $d_\gh$ defines a metric on compact metric spaces up to isometry.
To see that one can avoid taking an infimum over a proper class, restrict attention to the
case where $Z$ is the disjoint union of $X$ and $Y$, equipped with a metric extending the metrics on $X$
and $Y$.

\begin{definition}[Distortion and Codistortion]
    Define the \emph{distortion} of a function $\phi\colon X \rightarrow Y$ to be
\begin{equation}
\label{eq:dis_calc}
\dis(\phi)= \sup\limits_{x,x' \in X}\lvert d_X(x,x')-d_Y(\phi(x),\phi(x'))\rvert.
\end{equation} 
For two functions $\phi\colon X \rightarrow Y$ and $\psi\colon Y \rightarrow X$, define the \emph{codistortion} to be the coupling term:
\[\codis(\phi, \psi)=\sup\limits_{x\in X, y\in Y} \lvert d_X(x, \psi(y))-d_Y(\phi(x),y)\rvert.\]
\end{definition}
Intuitively, distortion measures how much a mapping stretches or shrinks pairwise distances within a single metric space, whereas codistortion quantifies how well a pair of forward and backward functions between two spaces work as pseudo-inverses of one another. 

In~\cite{kalton1999distances}, Kalton and Ostrovskii show that the Gromov--Hausdorff distance between compact metric spaces $X$ and $Y$ is equal to one half of an infimum of (co)distortions:
\begin{equation}
\label{eq:def-gh-dis-codis}
d_\gh(X,Y)=\frac{1}{2}\inf\limits_{\begin{smallmatrix} \phi\colon X \rightarrow Y & \\ \psi\colon Y \rightarrow X \end{smallmatrix}} \max(\dis(\phi), \dis(\psi), \codis(\phi, \psi)).
\end{equation}

\subsection{Correspondences and distortion}
\label{sec:correspondences}
Another equivalent definition of the Gromov--Hausdorff distance is given through correspondences.
A relation $C\subseteq X\times Y$ is a \emph{correspondence} if the following two
conditions hold:
\begin{enumerate}
\item For every $x\in X$, there exists $y\in Y$ such that $(x,y)\in C$, and
\item For every $y\in Y$, there exists $x\in X$ such that $(x,y)\in C$.
\end{enumerate}
So, every point in $X$ is related to at least one point in $Y$, and vice-versa.
The \emph{distortion} of a correspondence $C$
between $(X,d_X)$ and $(Y,d_Y)$ is defined as:
\[\dis(C)=\sup\limits_{(x,y), (x',y')\in C}\lvert d_X(x,x')-d_Y(y,y')\rvert.\]
We can equivalently define the Gromov--Hausdorff distance between two metric spaces $X$ and $Y$ as
\begin{equation}
\label{eq:GHandCorres}
d_\gh(X,Y)=\frac{1}{2}\inf\limits_{C\subseteq X\times Y}\dis(C),
\end{equation}
where the infimum is taken over all correspondences $C$ between $X$ and $Y$~\cite{BuragoBuragoIvanov,kalton1999distances}.

If $X$ and $Y$ are both compact metric spaces, then the infimum in the definition of the Gromov--Hausdorff distance is attained~\cite{IvanovIliadisTuzhili}.

\subsection{Bounds using diameter}

For any two bounded metric spaces $X$ and $Y$, the Gromov--Hausdorff distance is bounded by their diameters:
\begin{equation}
\label{eq:lessthan1forall}
d_\gh(X,Y)\leq \tfrac{1}{2}\max\{\diam(X),\diam(Y)\}.
\end{equation}
In order to prove this well-known result, let $p_X\in X$ and $p_Y\in Y$ be arbitrary points. 
Consider the correspondence $C=(X\times \{p_Y\})\cup (\{p_X\}\times Y)$.
The distortion of this correspondence is $\dis(C)=\max\{\diam(X),\diam(Y)\}$, since for any two points $x_1,x_2 \in X$, the pair $(x_1,p_Y)$ and $(x_2,p_Y)$ contributes $\lvert d_X(x_1,x_2)-0\rvert$ to the distortion.
Finally,~\eqref{eq:GHandCorres} proves the desired result.

Consequently, $d_\gh(B^m,B^n)\leq 1$ for all $m> n\geq 1$.
When one of the spaces is $B^0$, the single point metric space, we have $d_\gh(B^m, B^0)=\frac{1}{2}\diam(B^m)=1$ for $m\ge 1$.

\subsection{Simplicial complexes}
The next two subsections introduce two simplicial constructions used in our proofs: \v{C}ech complexes for the `invariance of dimension' approach (Section~\ref{sec:invOFdim}), and triangulations for the Borsuk--Ulam approach (Section~\ref{secBorsukUlam}). 
We often identify simplicial complexes with their geometric realizations.

We begin with the basic properties of simplicial maps.
A simplicial map $f\colon K \rightarrow L$ is continuous on geometric realizations.
If $K$ and $L$ are simplicial complexes and $f,g\colon K \rightarrow L$ are simplicial maps, then $f$ and $g$ are \emph{contiguous} if for every simplex $\sigma \in K$, the union $f(\sigma)\cup g(\sigma)$ is a simplex in $L$.
Contiguous simplicial maps induce homotopic maps on their geometric realizations, a property we exploit in Section~\ref{sec:invOFdim}.

\subsection{\v{C}ech complexes}
\begin{definition}
\label{def:cech}
For $Z$ a metric space, $X \subseteq Z$, and $r>0$, the \emph{\v{C}ech complex} $\cech{X}{r}$ is the simplicial complex having $X$ as its vertex set, and a finite set $[x_0, x_1, \ldots, x_k]\subseteq X$ as a simplex if there is some $z\in Z$ with $d(z,x_i)< r$ for all $0\leq i\leq k$.
\end{definition}

Equivalently, $[x_0, x_1, \ldots, x_k]$ is a simplex if $\bigcap_{i=0}^{k}B(x_i,r)\neq \emptyset$.
This construction is called the \emph{ambient} \v{C}ech complex, as we are looking at the intersection of balls in $Z$.
The case where $Z=X$ is referred to as the \emph{intrinsic} \v{C}ech complex, which we do not use in this paper.

The proof of Proposition~\ref{prop:constant-lower-bound} relies on the following technical lemma relating correspondences to \v{C}ech complexes~\cite{HvsGH}:

\begin{lemma}[Maps between ambient
\v{C}ech complexes, ~\cite{ChazalDeSilvaOudot2014} and Lemma~2.1 of~\cite{HvsGH}]
\label{lem:ambientCech}
Let $Z$ be a metric space, let
$X\subseteq Z$, and let $r>2d_\gh(X,Z)$.
This allows us to pick functions $h\colon Z \to X$ and $g\colon X\to Z$ with $\dis(h),\dis(g), \codis(h,g)< r$.
Then for any $\varepsilon>0$, 
the functions $h$ and $g$ induce simplicial maps $\bar{h}$ and $\bar{g}$
\[\cech{Z}{\varepsilon}\xrightarrow{\bar{h}} \cech{X}{r+\varepsilon}\xrightarrow{\bar{g}} \cech{Z}{3r+2\varepsilon}\]
such that the composition $\bar{g}\circ \bar{h}$ is contiguous to the inclusion $i\colon\cech{Z}{\varepsilon}\hookrightarrow \cech{Z}{3r+2\varepsilon}$, where all complexes are ambient
\v{C}ech complexes using balls in $Z$.
\end{lemma}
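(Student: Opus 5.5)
The plan is to define $\bar h$ and $\bar g$ on vertices by $\bar h(z)=h(z)$ and $\bar g(x)=g(x)$, and to check three things directly from the hypotheses $\dis(h),\dis(g),\codis(h,g)<r$. Throughout, $X\subseteq Z$ carries the restricted metric and all balls are taken in $Z$. The three things are: $\bar h$ sends simplices to simplices, $\bar g$ sends simplices to simplices, and $\bar g\circ\bar h$ is contiguous to the inclusion. In each case I exhibit an explicit witness point in $Z$ for the relevant intersection of balls.

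\emph{Step 1 ($\bar h$ is simplicial).} Let $[z_0,\dots,z_k]\in\cech{Z}{\varepsilon}$ with witness $z\in Z$, so $d(z,z_i)<\varepsilon$ for all $i$. I take the witness $h(z)\in X\subseteq Z$. Then $d(h(z),h(z_i))<d(z,z_i)+r<r+\varepsilon$ by $\dis(h)<r$, so $[h(z_0),\dots,h(z_k)]\in\cech{X}{r+\varepsilon}$.

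\emph{Step 2 (contiguity).} With the same $\sigma$ and $z$, I show that $z$ itself witnesses $\sigma\cup\bar g\bar h(\sigma)$ in $\cech{Z}{3r+2\varepsilon}$. For the vertices of $\sigma$ we have $d(z,z_i)<\varepsilon$. For the image vertices, apply the codistortion bound to the pair $(z,h(z_i))$:
\[
d(z,g(h(z_i)))<d(h(z),h(z_i))+r<d(z,z_i)+2r<2r+\varepsilon .
\]
Both bounds lie below $3r+2\varepsilon$, so $\bar g\circ\bar h$ and the inclusion are contiguous.

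\emph{Step 3 ($\bar g$ is simplicial).} I expect this to be the main obstacle, because a simplex $[x_0,\dots,x_k]\in\cech{X}{r+\varepsilon}$ has its witness $w$ in $Z$ rather than in $X$. My first attempt is to keep $w$ as the witness. Codistortion on the pair $(w,x_i)$ gives $d(w,g(x_i))<d(h(w),x_i)+r$. By the triangle inequality and $\dis(h)<r$,
\[
d(h(w),x_i)\le d(h(w),h(x_i))+d(h(x_i),x_i)<r+\varepsilon+d(h(x_i),x_i).
\]
So the budget $3r+2\varepsilon$ is met provided $d(h(x),x)\le r+\varepsilon$ for points $x\in X$. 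To get this I would use the room in the hypothesis $r>2d_\gh(X,Z)$ and the extra $\varepsilon$: exploit that $X\subseteq Z$ to choose $h$ close to the identity on $X$. One route is to redefine $h(x)=x$ for $x\in X$ and re-verify that $\dis(h)$ and $\codis(h,g)$ remain below $r$, possibly after slightly shrinking the strict inequalities. An alternative is to swap in the witness $g(h(w))$, which gives $d(g(h(w)),g(x_i))<d(h(w),x_i)+r$ and leads to the same estimate.

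Once the bound on $d(h(x),x)$ is secured, Steps 1–3 complete the proof. The final claim, that contiguous maps induce homotopic maps on geometric realizations, is the standard fact recalled above.
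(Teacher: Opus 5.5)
Your Steps 1 and 2 are correct, but Step 3 has a genuine gap, and neither of the repairs you suggest can close it.

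First, the bound $d(h(x),x)\le r+\varepsilon$ on $X$ does not follow from $\dis(h),\dis(g),\codis(h,g)<r$. Replacing $(h,g)$ by $(\rho\circ h,\ g\circ\rho^{-1})$ for an isometry $\rho$ of $X$ leaves all three quantities unchanged. For example, take $X=Z=S^1$ with $h=g$ the antipodal map: all three quantities are $0$, but $d(h(x),x)=\diam(X)$ for every $x$.

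Second, you cannot renormalize $h$ to be (nearly) the identity on $X$. Applying $\dis(h)<r$ to the pair $(z,h(z))$ gives $d(z,h(z))<d(h(z),h(h(z)))+r$. So your requirement forces $d(z,X)<2r+\varepsilon$ for all $z\in Z$, i.e.\ $d_\h(X,Z)\le 2r+\varepsilon$; and $h|_X=\mathrm{id}$ even forces $\dis(h)\ge d_\h(X,Z)$. The hypothesis only controls $d_\gh(X,Z)$, which can be far smaller. Take $Z=\{z_0,\ldots,z_N\}$ with the ultrametric $d(z_i,z_j)=N-\min(i,j)$ for $i\neq j$, and $X=Z\setminus\{z_0\}$. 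The shifts $h(z_i)=z_{\min(i+1,N)}$ and $g(z_j)=z_{j-1}$ satisfy $\dis(h)=\dis(g)=\codis(h,g)=1$, yet $d_\h(X,Z)=N$. So for $r$ slightly above $1$ and $N\ge 3$, no admissible $h$ has your property.

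Likewise, in the paper's own use of the lemma (Proposition~\ref{prop:constant-lower-bound}) one has $d_\h(B^n,B^m)=1$ while $2r+\varepsilon<\tfrac12$. A lemma needing your extra property could therefore not be invoked there. There is also an arithmetic slip: $\dis(h)<r$ gives $d(h(w),h(x_i))<d(w,x_i)+r<2r+\varepsilon$, not $r+\varepsilon$. Your budget would in fact require $d(h(x_i),x_i)\le\varepsilon$.

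The missing observation is that Step 3 needs neither $h$ nor the codistortion, and the witness should not be built from $w$ at all. If $w\in Z$ satisfies $d(w,x_i)<r+\varepsilon$ for all $i$, then $d(x_i,x_j)<2r+2\varepsilon$ by the triangle inequality. So $\dis(g)<r$ gives $d(g(x_0),g(x_j))<3r+2\varepsilon$ for every $j$. Hence the vertex image $g(x_0)\in Z$ itself witnesses $[g(x_0),\ldots,g(x_k)]\in\cech{Z}{3r+2\varepsilon}$. This is exactly where the constant $3r+2\varepsilon=r+2(r+\varepsilon)$ comes from. With this replacement for Step 3, your Steps 1 and 2 complete the proof.
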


\subsection{Triangulations}

For the Borsuk--Ulam approach in Section~\ref{secBorsukUlam}, we require the notion of triangulations with special symmetry properties.
A triangulation of a topological space $X$ is a representation of 
$X$ up to homeomorphism as the geometric realization of a simplicial complex.

\begin{definition}[Triangulation]
\label{def:triang}
A \emph{triangulation of a topological space $X$} is a pair $(\tau,t)$ where $\tau$ is a simplicial complex and $t\colon \tau \rightarrow X$ is a homeomorphism.
\end{definition}

Using $t$, we can identify the vertices of $\tau$ with points in $X$.
For metric spaces, we can impose a size constraint as follows:

\begin{definition}[$\varepsilon$-triangulation]
\label{def:ep-triang}
Let $X$ be a metric space and let $\varepsilon>0$.
A triangulation $(\tau,t)$ of $X$ is an \emph{$\varepsilon$-triangulation} if the vertex set $V(\tau)$ is a subset of $X$, if $t|_{V(\tau)}$ is the inclusion $V(\tau)\hookrightarrow X$, and if any simplex in  $\tau$ has diameter at most $\varepsilon$.
\end{definition}

\subsection{The Borsuk--Ulam theorem}
The main tool for lower bounding $d_\gh(B^m,B^n)$ used in Section~\ref{secBorsukUlam} is the Borsuk--Ulam theorem.
See~\cite{matousek2003using} for an exposition of the material in this section.

\begin{theorem}[The Borsuk--Ulam theorem~\cite{borsuk1933drei}]
Every continuous function from an $n$-sphere into Euclidean $n$-space maps some pair of antipodal points to the same point.
\end{theorem}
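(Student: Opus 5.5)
The statement is the classical Borsuk--Ulam theorem, which we cite from~\cite{borsuk1933drei,matousek2003using}. If we wanted a self-contained argument, we would reduce it to a non-existence statement about odd maps and then settle that with mod~$2$ cohomology of real projective spaces.

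\emph{Reduction.} Suppose, for contradiction, that $f\colon S^n\to\R^n$ is continuous and $f(x)\neq f(-x)$ for every $x\in S^n$. Define
\[g(x)=\frac{f(x)-f(-x)}{\lVert f(x)-f(-x)\rVert}.\]
This is a continuous map $g\colon S^n\to S^{n-1}$, and it is odd: $g(-x)=-g(x)$. So it suffices to show that no continuous odd map $S^n\to S^{n-1}$ exists. For $n=1$ this is immediate: $S^1$ is connected, but an odd map to $S^0=\{\pm1\}$ takes both values, so its image is disconnected.

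\emph{Passing to projective spaces.} Since $g$ is odd, it descends to a continuous map $\bar g\colon \R P^n\to\R P^{n-1}$. First, $\bar g$ is nontrivial on $\pi_1$ (or on $H_1(-;\Z/2)$) when $n\ge2$, so that $\pi_1(\R P^{n-1})=\Z/2$. Take a path $\gamma$ in $S^n$ from $x$ to $-x$; it projects to the generating loop of $\pi_1(\R P^n)$. Its image $g\circ\gamma$ is a path from $g(x)$ to $-g(x)$, which projects to the nontrivial loop in $\R P^{n-1}$, because a lift of the trivial class would have to be a closed loop. Hence $\bar g^*$ sends the generator $a\in H^1(\R P^{n-1};\Z/2)$ to the generator $b\in H^1(\R P^{n};\Z/2)$.

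\emph{Ring structure.} Recall that $H^*(\R P^k;\Z/2)=\Z/2[b]/(b^{k+1})$. Since $\bar g^*$ is a ring homomorphism,
\[b^n=\bar g^*(a)^n=\bar g^*(a^n)=\bar g^*(0)=0,\]
because $a^n$ lives in degree $n>n-1=\dim\R P^{n-1}$. But $b^n\neq0$ in $H^n(\R P^n;\Z/2)$, which is a contradiction.

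The main obstacle is the input from algebraic topology: the computation of the cohomology ring of $\R P^k$, and the covering-space step showing that $\bar g^*$ is nonzero in degree one. If we wanted to avoid cohomology, we would instead prove that odd maps $S^{n-1}\to S^{n-1}$ have odd degree. We would do this by simplicial approximation and a parity count, or equivalently via Tucker's lemma on a symmetric triangulation. We would then observe that $g$ restricted to the equator $S^{n-1}\subset S^n$ is odd but extends over a hemisphere, hence is null-homotopic and has degree $0$, which is again a contradiction.
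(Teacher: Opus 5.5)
Your proposal agrees with the paper, which does not prove the Borsuk--Ulam theorem but simply cites it (to Borsuk, with Matou\v{s}ek's book as the expository reference), exactly as your first sentence does. The optional self-contained sketch you add is the standard cohomological proof and is correct, with one small slip: for $n=2$ one has $\pi_1(\R P^{1})\cong\Z$ rather than $\Z/2$, but nothing breaks, since the projected loop lifts to a non-closed path from $g(x)$ to $-g(x)$, is therefore an odd multiple of the generator, and is still nonzero in $H_1(\R P^1;\Z/2)$, which is all the degree-one step needs.
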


Another result which is useful to us is the Lyusternik–Schnirelmann theorem, logically equivalent to the Borsuk--Ulam theorem:

\begin{theorem}[Lyusternik–Schnirelmann~\cite{lusternik1930topological}]
\label{thrm:LS}
Let $n \ge 0$, and let $\{U_1,\ldots, U_{n+1}\}$ be a closed cover of $S^{n}$.
Then there exists an index $i \in \{1,\ldots,n+1\}$ such that $U_i$ contains a pair of antipodal points.
\end{theorem}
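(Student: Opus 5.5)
We derive the Lyusternik–Schnirelmann theorem directly from the Borsuk--Ulam theorem by building a continuous map $S^n\to\R^n$ out of distance functions to the first $n$ sets of the cover.

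First we dispose of $n=0$. Here $S^0=\{\pm1\}$ and the cover consists of a single closed set $U_1$, which must equal $S^0$. So $U_1$ contains the antipodal pair $\{1,-1\}$.

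For $n\ge 1$, let $\{U_1,\ldots,U_{n+1}\}$ be a closed cover of $S^n$ and define $f\colon S^n\to\R^n$ by
\[
f(x)=\bigl(f_1(x),\ldots,f_n(x)\bigr),\qquad f_i(x)=d(x,U_i),
\]
with the convention $f_i\equiv 1$ whenever $U_i=\emptyset$. Each $f_i$ is continuous, being either $1$-Lipschitz or constant. The key property is that, since $U_i$ is closed, $f_i(x)=0$ holds if and only if $x\in U_i$. The empty case is consistent with this, because then $f_i$ never vanishes.

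By the Borsuk--Ulam theorem there is some $x\in S^n$ with $f(x)=f(-x)$. We split into two cases.
\begin{itemize}
\item If $f_i(x)=0$ for some $i\le n$, then $f_i(-x)=0$ as well. By the key property, both $x$ and $-x$ lie in $U_i$.
\item Otherwise $f_i(x)=f_i(-x)>0$ for every $i\le n$. Then neither $x$ nor $-x$ lies in $U_1\cup\cdots\cup U_n$. Since the sets cover $S^n$, both $x$ and $-x$ lie in $U_{n+1}$.
\end{itemize}
In either case some $U_i$ contains an antipodal pair, which proves the theorem.

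The only delicate points are the closedness hypothesis, which is what makes the zero set of $f_i$ equal to $U_i$ exactly, and the bookkeeping for empty sets. Neither presents a real obstacle.
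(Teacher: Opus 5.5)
Your argument is correct: it is the standard derivation of Lyusternik--Schnirelmann from Borsuk--Ulam via the map $x\mapsto \bigl(d(x,U_1),\ldots,d(x,U_n)\bigr)$, and it handles the $n=0$ case and empty sets properly. The paper does not prove this statement; it cites it as classical and logically equivalent to Borsuk--Ulam, referring to Matou\v{s}ek's book, where this same argument appears, so your proof follows the route the paper relies on.
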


\begin{definition}[Odd function]
\label{def:oddmap}
Let $X\subseteq \R^n$ and $Y \subseteq \R^m$ (or $\ell^2$) be symmetric subsets, meaning $-x\in X$ whenever $x\in X$ and $-y\in Y$ whenever $y\in Y$.
A function $f\colon X\rightarrow Y$ is \emph{odd} if for every $x \in X$, we have $f(-x)=-f(x)$.
\end{definition}

To apply the Borsuk--Ulam theorem, we will use triangulations that respect the antipodal symmetry of spheres.

\begin{definition}[Antipode-preserving triangulation]
A triangulation $\tau$ of $S^{k}$ is \emph{antipode-preserving} if for every simplex $\sigma \in \tau$, the simplex $-\sigma \coloneqq \{ -v : v \in \sigma \}$ is also in $\tau$.
\end{definition}

To form an antipode-preserving triangulation of $S^2$, first triangulate its equator $S^1$ in an antipodal way, then extend this to a triangulation of the northern hemisphere arbitrarily, and then reflect this triangulation through the origin to the southern hemisphere.
Figures~\ref{fig1} and \ref{fig2} depict antipode-preserving $\varepsilon$-triangulations of $S^1$ and $S^2$ respectively.

\begin{figure}
\centering
\begin{minipage}{.5\textwidth}
\centering
\includegraphics[width=.3\linewidth]{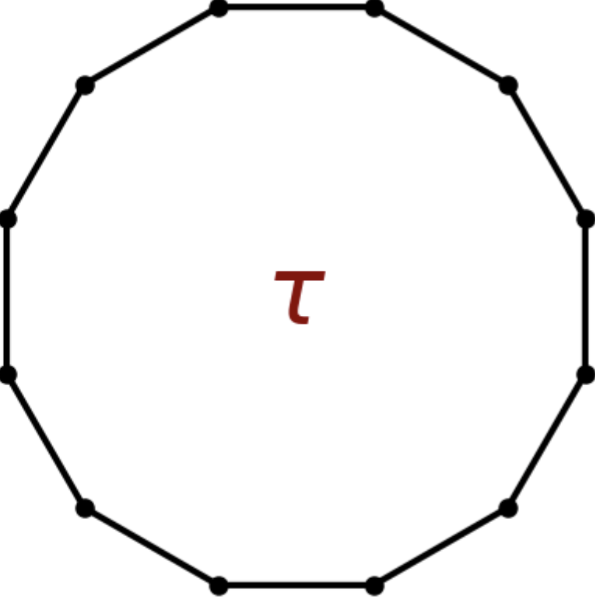}
\captionof{figure}{\small{An antipode-preserving triangulation $\tau$ of $S^1$.}}
\label{fig1}
\end{minipage}%
\begin{minipage}{.5\textwidth}
\centering
\includegraphics[width=.4\linewidth]{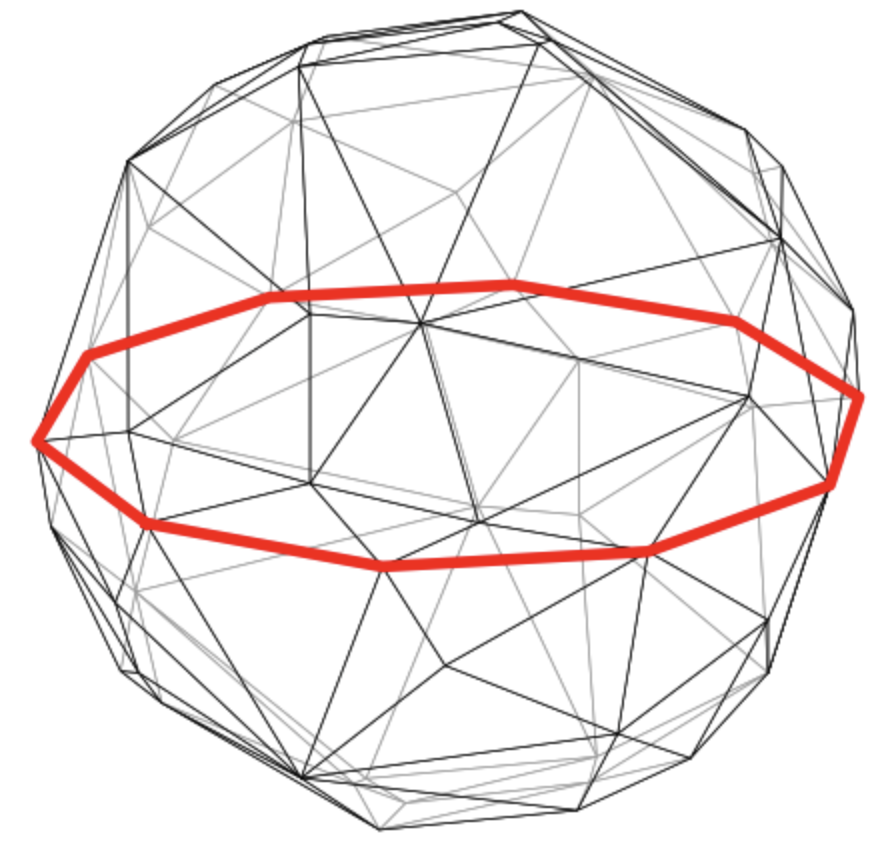}
\captionof{figure}{An antipode-preserving triangulation $\tau$ of $S^2$.
}
\label{fig2}
\end{minipage}
\end{figure}

An arbitrary point in $|\tau|$, the geometric realization of the triangulation $\tau$, can be written as a formal sum $\sum_{i=0}^k\lambda_i v_i$, with $v_i \in V(\tau)$,
where the barycentric coordinates $\{\lambda_i\}$ satisfy $0 \le \lambda_i \le 1$ and $\sum_{i} \lambda_{i}=1$.
This representation is crucial for constructing continuous extensions of maps in Section~\ref{secBorsukUlam}.

\section{`Invariance of Dimension' approach to lower bound $d_\gh(B^m,B^n)$}
\label{sec:invOFdim}

Our initial strategy to establish a non-trivial lower bound for $d_\gh(B^m,B^n)$ adapts the classical proof of `Invariance of Dimension'~\cite{munkres2000topology}.
Recall one can prove that Euclidean space $\R^m$ is not homeomorphic to $\R^n$ for $m>n$, as follows.
Suppose for a contradiction we had a homeomorphism $h \colon \R^m \rightarrow \R^n$.
Then restriction yields a homeomorphism $h \colon \R^m \setminus \{\vec{0}\} \rightarrow \R^n \setminus \{h(\vec{0})\}$.
However, we have homology groups $H_{m-1}(\R^m \setminus \{\vec{0}\})\cong \Z$ whereas $H_{m-1}(\R^n \setminus \{h(\vec{0})\})=0$ for $m > n$, so no such homeomorphism can exist.

The following proposition translates this structural obstruction into a quantitative metric bound by using the machinery of ambient \v{C}ech complexes. 
Although the resulting bound is superseded by better bounds in Section~\ref{secBorsukUlam}, we include it here because the `Invariance of Dimension' argument served as a conceptual motivation for our subsequent developments.

\constantlowerbound*

\begin{proof}
Suppose for a contradiction that $2d_\gh(B^m,B^n)<r$ for $r<\frac{1}{4}$.
By~\eqref{eq:def-gh-dis-codis}, where
\[2d_\gh(B^m,B^n)=\inf_{g,h}\max \{\dis(g),\dis(h), \codis(g,h)\},\]
there exist functions 
$h\colon B^m \rightarrow B^n$ and
$g\colon B^n \rightarrow B^m$ satisfying $\dis(g),\dis(h), \codis(g,h)< r$.
Since $r<\frac{1}{4}$, we may choose $\varepsilon>0$ and $c<1$ such that $4r+2\varepsilon < c < 1$.

Consider the scaled concentric balls $cB^m\coloneqq \{cx \mid x\in B^m\}$ and $(c-r)B^m$, which are strictly smaller than the unit ball.
We have the natural inclusion:
\[B^m \setminus cB^m \hookrightarrow B^m \setminus (c-r)B^m.\] 
Each of these punctured spaces above is homotopy equivalent to $S^{m-1}$, and the inclusion map is a homotopy equivalence.
Removing smaller interior balls is analogous to removing a point in the classical proof of the `Invariance of Dimension'.

We now apply Lemma~\ref{lem:ambientCech} with $Z=B^m$, $X=B^n$, and $r > 2d_\gh(X,Z)$
with the standard inclusion $B^n \hookrightarrow B^m$ given by $(x_1,x_2, \ldots, x_n)\mapsto (x_1,x_2, \ldots, x_n,0,\ldots,0)$.
As proven in Lemma~\ref{lem:ambientCech}, the function $h\colon B^m \rightarrow B^n$ extends to a simplicial map $\bar{h}\colon \cech{B^m}{\varepsilon}\rightarrow \cech{B^n}{r+\varepsilon}$ on ambient \v{C}ech complexes defined by $\bar{h}([z_0,\ldots,z_k])=[h(z_0),\ldots,h(z_k)]$.
Similarly, the function $g\colon B^n \rightarrow B^m$ extends to a simplicial map $\bar{g}\colon \cech{B^n}{r+\varepsilon}\rightarrow \cech{B^m}{3r+2\varepsilon}$ defined analogously.
Furthermore the composition $\bar{g}\circ \bar{h}$
\[\cech{B^m}{\varepsilon}\xrightarrow{\bar{h}} \cech{B^n}{r+\varepsilon}\xrightarrow{\bar{g}} \cech{B^m}{3r+2\varepsilon}\]
is contiguous to the inclusion $i: \cech{B^m}{\varepsilon}\hookrightarrow\cech{B^m}{3r+2\varepsilon}$.
All \v{C}ech complexes are ambient \v{C}ech complexes in $B^m$.

Note that $h(B^m\setminus cB^m)\subseteq B^n \subseteq B^m$, where the last inclusion is obtained by appending $m-n$ zeroes to each $n$-dimensional vector.
Restriction gives
\[\cech{B^m\setminus cB^m}{\varepsilon}\xrightarrow{\bar{h}} \cech{h(B^m\setminus cB^m)}{r+\varepsilon}\xrightarrow{\bar{g}} \cech{B^m\setminus (c-r)B^m}{3r+2\varepsilon}.\]
To see that the composition $\bar{g}\circ \bar{h}$ lands in the \v{C}ech complex on $B^m \setminus (c-r)B^m$, note that $\codis(g,h)< r$ implies $d(x,g(h(x)))<r$ for all $x \in B^m$.
Thus, $x\in B^m \setminus cB^m$ implies $g(h(x)) \in B^m \setminus (c-r)B^m$.
Consequently, the composition is contiguous to, and hence homotopic to, the inclusion map.

Since $4r+ 2\varepsilon < c$, we have $3r+2\varepsilon < (c-r)$.
Hence $\cech{B^m\setminus (c-r)B^m}{3r+2\varepsilon}$ is homotopy equivalent to the sphere $S^{m-1}$ by the nerve lemma~\cite{Borsuk1948,Dieck,Hatcher}.
By similar reasoning, we have $\cech{B^m\setminus cB^m}{\varepsilon}\simeq S^{m-1}$.
Therefore, the composition $\bar{g}\circ \bar{h}$ is homotopic to the inclusion $\cech{B^m\setminus cB^m}{\varepsilon}\hookrightarrow \cech{B^m\setminus (c-r)B^m}{3r+2\varepsilon}$, which by functoriality of the nerve lemma represents a homotopy equivalence between spaces homotopy equivalent to $S^{m-1}$.

Recall $h(B^m\setminus cB^m)\subseteq B^n\subseteq B^m$.
Therefore, the ambient \v{C}ech complex $\cech{h(B^m\setminus cB^m)}{r+\varepsilon}$ with balls taken in $B^m$ equals the ambient \v{C}ech complex $\cech{h(B^m\setminus cB^m)}{r+\varepsilon}$ with balls taken in $B^n$.
Since $m>n$, the union of balls in $B^n$ centered at each point in $h(B^m\setminus cB^m)$ has trivial homology group $H_{m-1}$.
By the nerve lemma, the \v{C}ech complex $\cech{h(B^m\setminus cB^m)}{r+\varepsilon}$ has trivial $H_{m-1}$.
However, this contradicts the fact that $\bar{g}\circ \bar{h}$ is a homotopy equivalence between spaces homotopy equivalent to $S^{m-1}$, which has $H_{m-1}(S^{m-1})\cong \Z$.

Hence we must have $2d_\gh(B^m, B^n) \ge \frac{1}{4}$, which gives $d_\gh(B^m, B^n) \ge \frac{1}{8}$.
\end{proof}

The lower bound $d_\gh(B^m, B^n) \ge \frac{1}{8}$ in Proposition~\ref{prop:constant-lower-bound} does not depend on specific dimensions $m$ and $n$, nor on the difference between them.
See Theorem~\ref{thm:general} for a result depending on $n$, and Proposition~\ref{prop:covering} for a result depending on both $m$ and $n$.

\section{Using the Borsuk--Ulam theorem to lower bound $d_\gh(B^m,B^n)$}
\label{secBorsukUlam}

In this section, we use the Borsuk--Ulam theorem to establish a lower bound on the Gromov--Hausdorff distance between unit balls of different dimensions.
The core strategy is to show that any function $B^m \rightarrow B^n$ for $m>n$ must exhibit a certain minimum distortion.
For our bound, it suffices to focus on the restriction of this function to the boundary sphere $S^{m-1} \rightarrow B^n$.
Our method proceeds by constructing a continuous, piecewise linear approximation of this restricted function on an antipode-preserving $\varepsilon$-triangulation of $S^{m-1}$ (see Definition~\ref{def:ep-triang}).
Applying the Borsuk--Ulam theorem to this continuous approximation produces two antipodal simplices in the $\varepsilon$-triangulation whose images overlap, providing a quantifiable lower bound on the distortion of the function $S^{m-1}\to B^n$.
By~\eqref{eq:def-gh-dis-codis}, this yields a lower bound for the Gromov--Hausdorff distance $d_\gh(B^m,B^n)$.

\begin{definition}[A geometric quantity for intersecting convex hulls]
\label{def:alphan}
For each $n\geq 1$, define 
\[\alpha_n=\sup \left\{d(X,Y) \mid X,Y\subseteq \R^n \text{ are finite},\ \diam(X),\diam(Y)\leq 1,\, \conv(X)\cap\conv(Y) \neq \emptyset\right\}.\]
\end{definition}

Roughly speaking, $\alpha_n$ upper bounds the minimum distance between the vertex sets of two intersecting simplices of diameter at most $1$.

In Lemma~\ref{lem:algebraic-general} we find the exact values of $\alpha_n$ for all $n\geq 1$.
The proof of this lemma appears in Appendix~\ref{app:proofoflemmageneral}.

\begin{lemma}
\label{lem:algebraic-general}
We have $\alpha_n=
\begin{cases}
 \sqrt{\frac{n}{n+2}},&n\text{ even},\\
 \sqrt{\frac{n^2+2n-1}{(n+1)(n+3)}},&n\text{ odd}.
\end{cases}$
\end{lemma}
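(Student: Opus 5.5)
The plan is to prove matching upper and lower bounds for $\alpha_n$. Both come from one weighted-variance identity together with a Carathéodory-type reduction on the number of points.

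\textbf{Upper bound.} Let $X,Y\subseteq\R^n$ be finite with $\diam(X),\diam(Y)\le 1$, and let $p\in\conv(X)\cap\conv(Y)$.

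First, reduce the supports. The set of weight pairs $(\lambda,\mu)\in\R_{\ge 0}^{X}\times\R_{\ge 0}^{Y}$ satisfying
\[\sum_x\lambda_x x=\sum_y\mu_y y,\qquad \sum_x\lambda_x=1,\qquad \sum_y\mu_y=1\]
is a nonempty polytope. It is cut out by $n+2$ linear equations together with nonnegativity constraints. Hence it has an extreme point, and any extreme point has at most $n+2$ nonzero coordinates.

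Replace $X$ and $Y$ by the supports $X'\subseteq X$ and $Y'\subseteq Y$ of such an extreme point, with $a=|X'|$ and $b=|Y'|$. Then $a,b\ge 1$ and $a+b\le n+2$. Passing to subsets can only increase $d(\cdot,\cdot)$ and preserves the diameter bounds. So it suffices to bound $d(X',Y')$, and we relabel $p=\sum\lambda_i x_i=\sum\mu_j y_j$ with all weights positive.

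Next, apply the identity. Because both weighted means equal $p$, the cross terms vanish and
\[\sum_{i,j}\lambda_i\mu_j|x_i-y_j|^2=\sum_i\lambda_i|x_i-p|^2+\sum_j\mu_j|y_j-p|^2 .\]
For the $X$-term we use
\[\sum_i\lambda_i|x_i-p|^2=\tfrac12\sum_{i,i'}\lambda_i\lambda_{i'}|x_i-x_{i'}|^2\le\tfrac12\Bigl(1-\sum_i\lambda_i^2\Bigr)\le\tfrac12\Bigl(1-\tfrac1a\Bigr),\]
where the first inequality uses $\diam\le 1$ and the second is Cauchy--Schwarz. The $Y$-term is bounded the same way with $b$ in place of $a$.

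The left-hand side is at least $d(X',Y')^2$, so
\[d(X,Y)^2\le 1-\tfrac1{2a}-\tfrac1{2b}.\]
We maximize the right-hand side over integers $a,b\ge 1$ with $a+b\le n+2$. By convexity of $1/t$, the optimum takes $a+b=n+2$ as balanced as possible.

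For $n$ even, $a=b=\tfrac{n+2}{2}$, which gives $1-\tfrac{2}{n+2}=\tfrac{n}{n+2}$.

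For $n$ odd, $\{a,b\}=\{\tfrac{n+1}2,\tfrac{n+3}2\}$, which gives
\[1-\tfrac1{n+1}-\tfrac1{n+3}=\tfrac{n^2+2n-1}{(n+1)(n+3)}.\]

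\textbf{Lower bound.} With $a,b$ as above, we have $(a-1)+(b-1)=n$. So $\R^n$ splits as an orthogonal sum $U\oplus W$ with $\dim U=a-1$ and $\dim W=b-1$.

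Take $X$ to be the vertex set of a regular $(a-1)$-simplex of edge length $1$ in $U$, and $Y$ the vertex set of a regular $(b-1)$-simplex of edge length $1$ in $W$, both centered at the origin. Then $0$ lies in both convex hulls.

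Their circumradii satisfy $R_a^2=\tfrac{a-1}{2a}$ and $R_b^2=\tfrac{b-1}{2b}$; this is the equality case of Jung's theorem, Theorem~\ref{thm:JungsTheorem}. By orthogonality, every cross distance equals
\[\sqrt{R_a^2+R_b^2}=\sqrt{1-\tfrac1{2a}-\tfrac1{2b}},\]
which attains the upper bound. The case $n=1$, $a=1$, $b=2$ recovers $\alpha_1=\tfrac12$.

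\textbf{Main obstacle.} The delicate step is the support reduction. It must deliver the joint count $a+b\le n+2$, rather than $n+1$ points for each set separately. I would handle it via the extreme-point argument on the polytope of weight pairs described above.

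The rest is routine: the variance identity, and the integer optimization of $1/a+1/b$ under $a+b\le n+2$, which must be checked separately for even and odd $n$.
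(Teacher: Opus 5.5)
Your proof is correct and matches the paper's argument in all essentials: the weighted variance identity with vanishing cross terms, the Cauchy--Schwarz bound $\tfrac12\bigl(1-\tfrac1a\bigr)$, the balanced split $a+b=n+2$, and orthogonal regular simplices for sharpness. The only difference is the support reduction. You take an extreme point of the polytope of weight pairs, whose support has at most $n+2$ elements by the basic-feasible-solution count. The paper instead takes a vertex $z$ of $\conv(X)\cap\conv(Y)$ with minimal supports $X_0,Y_0$ and shows the direction spaces satisfy $U\cap V=\{0\}$; both routes give the same joint bound $|X_0|+|Y_0|\le n+2$.
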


Observe that for large $n$, $ \sqrt{\frac{n}{n+2}}\rightarrow 1$ and $\sqrt{\frac{n^2+2n-1}{(n+1)(n+3)}}\rightarrow 1$.
Furthermore, $\alpha_n$ strictly increases to $1$ as $n\rightarrow\infty$, while $\alpha_n<1$ for every finite $n$.

\subsection{A Borsuk--Ulam bound}

Using Lemma~\ref{lem:algebraic-general}, we can now prove the following theorem.

\maintheorem*

This lower bound strictly decreases to $\frac{1}{2}$ as $n\to \infty$.

\begin{proof}
Let $m>n$, i.e., let $m-1\ge n$.
By~\eqref{eq:def-gh-dis-codis}, we have $d_\gh(B^m,B^n)\geq \inf\{\frac{\dis(F)}{2}\}$, where the infimum is taken over all (possibly discontinuous) functions $F\colon B^m \to B^n$.
So, consider an arbitrary function $F\colon B^m \rightarrow B^n$.
To control the distortion of $F$, we will in fact control the distortion of its restriction $f=F|_{S^{m-1}}$ to the boundary sphere $S^{m-1}$.
Note $\dis(F) \ge \dis(f)$.

Let $\varepsilon>0$.
Consider an antipode-preserving $\varepsilon$-triangulation $\tau$ of $S^{m-1}$ with vertex set $V(\tau)$.
So $\tau\cong S^{m-1}$.
From the possibly discontinuous function $f\colon S^{m-1}\to B^n$ we define a \emph{continuous} function $\bar{f}\colon \tau \rightarrow B^n$, as follows.
First, for each $v\in V(\tau)$, let $\bar{f}(v)=f(v)$.
Second, on a point in the geometric realization of $\tau$, define $\bar{f}(\sum_{i=0}^k\lambda_i v_i)=\sum_{i=0}^k\lambda_i f(v_i)\in B^n$, where this last sum is a convex combination of points in the convex set $B^n$.

By the Borsuk--Ulam theorem, there are two antipodal points $x,-x \in \tau$ that are mapped to the same location by $\bar{f}$, i.e., $\bar{f}(-x)=\bar{f}(x)$.
Let $[u_0, \ldots,u_{m-1}]$ and $[-u_0, \ldots,-u_{m-1}]$ be antipodal $(m-1)$-dimensional simplices in $\tau$ which contain $x$ and its antipode $-x$, respectively.

For any $0\le i,j\le m-1$, we have $d(\bar{f}(u_i),\bar{f}(u_j))\le \dis(f)+d(u_i,u_j)\le \dis(f)+\varepsilon$.
Consider the finite subsets $X=\{f(u_0),\ldots,f(u_{m-1})\}$ and $Y=\{f(-u_0),\ldots,f(-u_{m-1})\}$ of $B^n$.
The image of the simplex $[u_0, \ldots,u_{m-1}]$ under $\bar{f}$ is $\bar{f}([u_0, \ldots,u_{m-1}])=\conv(X)$, and similarly $\bar{f}([-u_0, \ldots,-u_{m-1}])=\conv(Y)$.
Both $\conv(X)$ and $\conv(Y)$ are compact sets in $B^n$, each having diameter at most $\dis(f)+\varepsilon$.

The condition $\bar{f}(-x)=\bar{f}(x)$ implies that $\conv(X)\cap \conv(Y)\neq \emptyset$.
Let $u^*\in \{u_0, \ldots,u_{m-1}\}$ and $v^*\in \{-u_0, \ldots,-u_{m-1}\}$ be the vertices that realize the minimum distance between the sets $X$ and $Y$.
By Definition~\ref{def:alphan},  this distance is bounded above by $(\dis(f)+\varepsilon) \alpha_n$, giving $d(\bar{f}(u^*),\bar{f}(v^*)) \leq (\dis(f)+\varepsilon) \alpha_n$.

Using~\eqref{eq:dis_calc}, we have
\[
\dis(f) \geq d(u^*,v^*) - d(\bar{f}(u^*),\bar{f}(v^*)) \geq (2-2\varepsilon) - (\dis(f)+\varepsilon) \alpha_n.
\]
So $\dis(f)(1+\alpha_n) \geq 2-\varepsilon\left(2+\alpha_n\right)$, which gives
\[\dis(f) \geq \tfrac{2}{\left(1+\alpha_n\right)}-\varepsilon\, \tfrac{2+\alpha_n}{\left(1+\alpha_n\right)}.\]

In summary, any function $F\colon B^m\to B^n$ satisfies $\frac{\dis(F)}{2} \geq \frac{\dis(f)}{2}\geq \tfrac{1}{1+\alpha_n} - \tfrac{\varepsilon}{2}\,\tfrac{2+\alpha_n}{\left(1+\alpha_n\right)}$ for any $\varepsilon>0$.
Hence
$d_\gh(B^m,B^n)\geq \inf\{\frac{\dis(F)}{2}\}\geq \tfrac{1}{1+\alpha_n}$.
Substituting the value of $\alpha_n$ from Lemma~\ref{lem:algebraic-general}, we obtain the required result.
\end{proof}

\begin{remark}
The same proof gives $d_\gh(S^{m-1},B^n) \ge \tfrac{1}{1+\alpha_n}$ for $n\ge 1$ and $m>n$.
\end{remark}

Indeed, the argument used in the proof of Theorem~\ref{thm:general} immediately restricts any function $F\colon B^m \to B^n$ to $f=F|_{S^{m-1}}$, and all subsequent steps (the $\varepsilon$-triangulation, Borsuk--Ulam argument, and distortion bound) involve only $f\colon S^{m-1} \to B^n$.
The same argument therefore applies to any function $S^{m-1} \to B^n$ directly, which by~\eqref{eq:def-gh-dis-codis} gives $d_\gh(S^{m-1},B^n) \ge \tfrac{1}{1+\alpha_n}$.

Figure~\ref{final} illustrates the proof of $d_\gh(B^2,B^1) \ge \tfrac{1}{1+\alpha_1} = \frac{2}{3}$.

\begin{figure}[htb]
\centering
\includegraphics[width=0.7\textwidth]{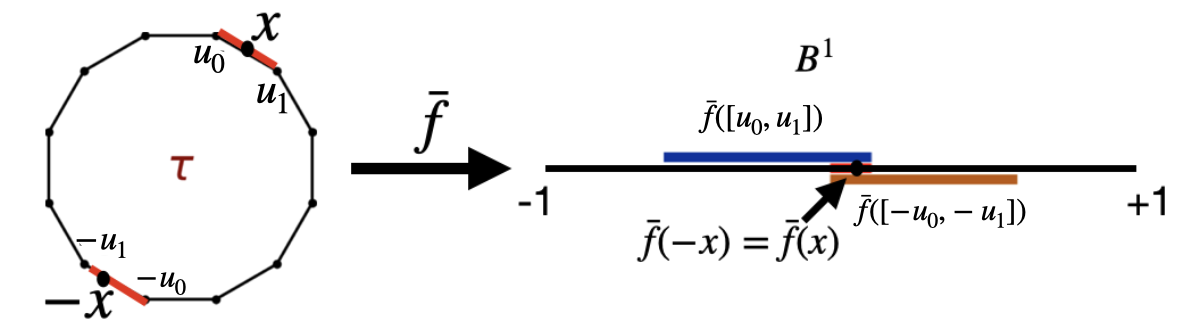}
\caption{
Geometric visualization of a map $\bar{f}\colon S^{m-1}\rightarrow B^n$ with $m=2$ and $n=1$.
The images of the antipodal simplices $[u_0, u_1]$ and $[-u_0, -u_1]$ overlap since $\bar{f}(-x)=\bar{f}(x)$.
See~\cite[Lemma~2.3]{katz2020torus} for a related construction.
}
\label{final}
\end{figure}

\subsection{An asymptotic result using covering radii}

The techniques developed in the following propositions and lemmas are closely adapted from the framework established by Lim, Mémoli and Smith~\cite{lim2023gromov}, for bounding the Gromov--Hausdorff distance between spheres.
We derive the exact Gromov--Hausdorff distance $d_\gh(B^m,B^n)$ when $n=0$ or when $m=\infty$.
Following also~\cite[Lemma~5.10]
{colding1996large} and~\cite{funano2008estimates}, we establish a lower bound using the covering radius of balls in Proposition~\ref{prop:covering}.
In one step, we use the Lyusternik–Schnirelmann theorem, which is logically equivalent to the Borsuk--Ulam theorem~\cite{matousek2003using}.

The first step is to compare a unit ball to a finite metric space $P$.
Lemma~\ref{lemma:P} and Proposition~\ref{prop:equalto1} provide the groundwork we need to prove the covering radius result in Proposition~\ref{prop:covering}.

\begin{lemma}
\label{lemma:P}
Let $m\ge 1$ and let $P$ be a finite metric space with $|P| \le m$.
Then $d_\gh(B^m,P)\geq 1$, and $d_\gh(B^m,P)=1$ if $\diam (P)\leq 2$.
\end{lemma}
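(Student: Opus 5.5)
Both claims come from the distortion formulation~\eqref{eq:def-gh-dis-codis}. For the inequality $d_\gh(B^m,P)\ge 1$ it suffices to show that every function $F\colon B^m\to P$ has $\dis(F)\ge 2$. As in the proof of Theorem~\ref{thm:general}, restrict to the boundary sphere $f=F|_{S^{m-1}}$. Since $|P|\le m$, write $P=\{p_1,\ldots,p_k\}$ with $k\le m$. The fibres $A_i=f^{-1}(p_i)$ cover $S^{m-1}$, but they need not be closed, so Theorem~\ref{thrm:LS} does not apply directly.

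To get closed sets, replace each fibre by its closure $U_i=\overline{A_i}$. These $k\le m=(m-1)+1$ closed sets cover $S^{m-1}$; pad with empty sets if $k<m$. By the Lyusternik--Schnirelmann theorem (Theorem~\ref{thrm:LS}) applied to $S^{m-1}$, some $U_i$ contains an antipodal pair $x,-x$. Then for every $\delta>0$ there are points $y,y'\in A_i$ with $d(y,x)<\delta$ and $d(y',-x)<\delta$, so $d(y,y')>2-2\delta$. Since $f(y)=f(y')=p_i$, we get $\dis(f)\ge 2-2\delta$ for all $\delta>0$, hence $\dis(F)\ge\dis(f)\ge 2$. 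Taking the infimum in~\eqref{eq:def-gh-dis-codis} gives $d_\gh(B^m,P)\ge\tfrac12\cdot 2=1$.

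For the equality when $\diam(P)\le 2$, apply the diameter bound~\eqref{eq:lessthan1forall}: $d_\gh(B^m,P)\le\tfrac12\max\{\diam(B^m),\diam(P)\}=\tfrac12\max\{2,\diam(P)\}=1$. Combined with the lower bound, this gives equality.

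The main obstacle is that $F$ may be discontinuous, so its fibres need not be closed. The closure trick plus the approximation by nearby points handles this, and the only cost is the $\delta$, which disappears in the limit.
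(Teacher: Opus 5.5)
Your proof is correct and takes essentially the same route as the paper: the closures of the preimages of the points of $P$ form a closed cover of $S^{m-1}$ by at most $m$ sets, Lyusternik--Schnirelmann gives one of them diameter $2$, and \eqref{eq:lessthan1forall} yields equality. The differences are cosmetic. You work with a function $F\colon B^m\to P$ via \eqref{eq:def-gh-dis-codis} and spell out the passage from $\overline{A_i}$ back to $A_i$ with your $\delta$-approximation. The paper instead works with an arbitrary correspondence $C$ and its sets $C(p)$, and simply uses $\diam(C(p_0))=\diam(\overline{C(p_0)})$.
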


\begin{proof}
Let $C$ be a correspondence between $B^m$ and $P$.
Restrict $C$ to obtain a correspondence between $S^{m-1}$ and $P'=\{p\in P~:~(x,p)\in C\text{ for some }x\in S^{m-1}\}$.
For $p\in P'$, let $C(p)=\{z\in S^{m-1}\mid (z,p)\in C\}$.
Since $C$ is a correspondence, $\{C(p)\}_{p\in P'}$ is a cover of $S^{m-1}$, and $\{\overline{C(p)}\}_{p\in P'}$ is a closed cover of $S^{m-1}$.
Since $\lvert P'\rvert\leq m$, Lyusternik–Schnirelmann (Theorem~\ref{thrm:LS}) gives $\diam(\overline{C(p_0)})\geq 2$ for some $p_0\in P$.
Hence $\dis(C) \ge \diam(C(p_0)) = \diam(\overline{C(p_0)}) \geq 2$, giving $d_\gh(B^m,P)\geq 1$.
If $\diam (P)\leq 2$, then~\eqref{eq:lessthan1forall} lets us conclude $d_\gh(B^m,P)=1$.
\end{proof}

\covering*
This bound is in terms of both $m$ and $n$.

\begin{proof}
Let $P$ be any nonempty subset of $B^n$ with $|P| \le m$.
The triangle inequality and Lemma~\ref{lemma:P} give
\[d_\gh(B^m,B^n)
\ge d_\gh(B^m,P) - d_\gh(P,B^n)
\ge d_\gh(B^m,P) - d_\h(P,B^n)
= 1 - d_\h(P,B^n).\]
By the definition of covering radius~\eqref{eq:defcoveringradius}, we obtain the claim by taking the infimum over all such possible choices of $P$.
\end{proof}


Figure~\ref{fig:Prop4vsThrm2} compares the lower bounds for $d_\gh(B^m, B^n)$ obtained via Proposition~\ref{prop:covering} and Theorem~\ref{thm:general}.
For $n=1$, Theorem~\ref{thm:general} yields a constant lower bound of $d_\gh(B^m,B^1)\geq\tfrac{2}{3}$, which is strictly better than the lower bounds from Proposition~\ref{prop:covering} for $m$ up to 2.
Proposition~\ref{prop:covering} matches it at $m=3$ and surpasses it for all $m\geq4$.

For $n=2$, Theorem~\ref{thm:general} establishes a lower bound of $d_\gh(B^m,B^2)\geq\tfrac{\sqrt{2}}{\sqrt{2}+1}\approx 0.586$, providing a better lower bound for $m$ up to 8, while Proposition~\ref{prop:covering} approximately matches it at $m=9$ and surpasses it for $m\geq10$.
In this case, the explicit covering radii for $B^2$ utilized in Proposition~\ref{prop:covering} are taken from the optimal configurations compiled by Friedman~\cite{friedman2021circles}.

For $n=3$, Theorem~\ref{thm:general} provides a lower bound of $d_\gh(B^m,B^3)\geq\tfrac{2\sqrt{3}}{2\sqrt{3}+\sqrt{7}}\approx 0.567$.
This remains strictly better than the lower bounds from Proposition~\ref{prop:covering} for $m$ up to at least 24~\cite{bezdek2015covering,wynn2012mathoverflow}.

Similarly, for $n=4$ and $5$, Theorem~\ref{thm:general} gives better lower bounds for $d_\gh(B^m, B^n)$ for $m$ up to at least 48 and 113, respectively~\cite{glazyrin2019covering, verger2005covering, li2011concise}.

For higher values of $n$ ($n\geq 3$), the exact values of $\cov_{B^n}(m)$ remain unknown for most $m$, necessitating the use of upper and lower bounds for $\cov_{B^n}(m)$.
When restricting attention to the Gromov--Hausdorff distance between balls of consecutive dimensions, $d_\gh(B^{n+1},B^n)$, Theorem~\ref{thm:general} always gives better lower bounds compared to those coming from Proposition~\ref{prop:covering}~\cite{boroczky2004finite}.

We conclude this section by considering $B^\infty$.

\begin{proposition}
\label{prop:equalto1}
We have $d_\gh(B^\infty,B^n)=1$ for any integer $n\ge 0$.
\end{proposition}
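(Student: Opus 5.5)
The upper bound is immediate from~\eqref{eq:lessthan1forall}: both $B^\infty$ and $B^n$ have diameter at most $2$, so $d_\gh(B^\infty,B^n)\le 1$. (For $n=0$ both bounds are also immediate, since $B^0$ is a point and $d_\gh(B^\infty,B^0)=\tfrac12\diam(B^\infty)=1$.) The work is therefore in the lower bound $d_\gh(B^\infty,B^n)\ge 1$ for $n\ge 1$.

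The plan is to mimic the proof of Proposition~\ref{prop:covering}, with $B^\infty$ in place of $B^m$. Fix $\delta>0$. Since $B^n$ is compact, it has a finite $\delta$-net $P\subseteq B^n$; let $k=|P|$. Then $d_\gh(P,B^n)\le d_\h(P,B^n)\le\delta$. By the triangle inequality for $d_\gh$,
\[d_\gh(B^\infty,B^n)\ge d_\gh(B^\infty,P)-\delta.\]
It then suffices to show $d_\gh(B^\infty,P)\ge 1$ for every finite metric space $P$. Letting $\delta\to 0$ gives the result.

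To prove $d_\gh(B^\infty,P)\ge 1$, take any correspondence $C$ between $B^\infty$ and $P$, with $|P|=k$. The cover $\{C(p)\}_{p\in P}$ of $B^\infty$ restricts to a cover of the finite-dimensional sphere $S^{k-1}\subseteq S^\infty$, embedded via the first $k$ coordinates; this embedding is isometric. Taking closures in $S^{k-1}$ gives a closed cover by at most $k$ sets. By Lyusternik--Schnirelmann (Theorem~\ref{thrm:LS}), some $\overline{C(p_0)\cap S^{k-1}}$ contains an antipodal pair, so $\diam(C(p_0))\ge 2$. Two points related to the same $p_0$ contribute $|d(x,x')-0|$ to the distortion, so $\dis(C)\ge 2$. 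Equivalently, we apply Lemma~\ref{lemma:P} with $m=k$ to the subspace $B^k\subseteq B^\infty$.

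The main subtlety is justifying that passing from $B^\infty$ to a subspace does not increase the relevant lower bound. We cannot simply say $d_\gh(B^\infty,P)\ge d_\gh(B^k,P)$ in general. Instead, the argument is phrased directly with correspondences as above: restricting $C$ to $S^{k-1}\times P$ yields a relation whose distortion is at most $\dis(C)$. The Lyusternik--Schnirelmann argument only needs the sets $C(p)\cap S^{k-1}$ to cover $S^{k-1}$, which they do. This avoids any compactness issue with $B^\infty$: although $B^\infty$ is not compact, the definition of $d_\gh$ via infimum over correspondences~\eqref{eq:GHandCorres} still applies.
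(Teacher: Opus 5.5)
Your proposal is correct and follows the paper's proof step for step: the upper bound comes from~\eqref{eq:lessthan1forall}; then $d_\gh(B^\infty,P)\ge 1$ for every finite $P$, obtained by restricting a correspondence to a closed cover of $S^{|P|-1}\subseteq S^\infty$ and applying Lyusternik--Schnirelmann; and finally the triangle inequality with a finite $\delta$-net of $B^n$. One small wording point: it is the \emph{argument} of Lemma~\ref{lemma:P} that transfers to $B^\infty$, not its statement about $d_\gh(B^k,P)$, so the ``Equivalently'' sentence overstates things, though your final paragraph already recognizes this.
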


\begin{proof}
Let $P$ be a finite metric space. As in the proof of Lemma~\ref{lemma:P}, a correspondence $C$ between $B^\infty$ and $P$ induces a closed cover of $S^\infty$, the boundary of $B^\infty$.
Thus, it induces a closed cover of any finite dimensional sphere $S^{\lvert P \rvert -1}\subseteq S^\infty$.
Again using Theorem~\ref{thrm:LS}, we get $\dis(C)\geq 2$ and $d_\gh(B^\infty,P)\geq 1$.

Fix $\varepsilon>0$ and let $P_\varepsilon\subseteq B^n$ be a finite $\varepsilon$-net for $B^n$.
By the triangle inequality, we have
\[d_\gh(B^\infty,B^n)
\geq d_\gh(B^\infty,P_\varepsilon)-d_\gh(B^n,P_\varepsilon)
\geq d_\gh(B^\infty,P_\varepsilon)-d_\h(B^n,P_\varepsilon)
\geq 1-\varepsilon.\]
Since $\varepsilon >0$ was arbitrary, we have $d_\gh(B^\infty,B^n)\geq 1$.
And~\eqref{eq:lessthan1forall} gives $d_\gh(B^\infty,B^n)=1$.
\end{proof}

The same proof gives $d_\gh(B^\infty,X)\ge 1$ for any totally bounded metric space $X$.

\section{Upper bounds on $d_\gh(B^m,B^n)$}
\label{sec:upperbounds}

We explore whether the techniques used to establish the upper bounds on the Gromov--Hausdorff distance between spheres can be extended to prove better upper bounds on $d_\gh(B^m,B^n)$ for $m>n\geq 1$.
In~\cite[Theorem~E]{lim2023gromov}, Lim, Mémoli, and Smith use space-filling curves to prove there exists a continuous odd surjection $\psi_{m,n}$ from the low-dimensional sphere $S^n$ to the high-dimensional sphere $S^m$.
The graph of this surjection constitutes a correspondence whose distortion proves $d_\gh(S^m,S^n)<d_\h(S^m,S^n)$.

We use the framework from~\cite{lim2023gromov}, namely continuous odd surjections, to show that the distance between unit balls of distinct dimensions is strictly less than $1$.
Recall from Definition~\ref{def:oddmap} that a function $f\colon S^n\to S^m$ is \emph{odd} if 
$f(-x)=-f(x)$ for every $x\in S^n$.
We prove:

\upperboundlessthanone*

In order to prove Theorem~\ref{thrm:upperbound}, we begin by establishing the existence of a continuous odd surjection from a lower-dimensional ball to a higher-dimensional one.
Please refer to Appendix~\ref{app:oddsctssurjective} for the proof.

\begin{theorem}
\label{thrm:lowtohigh}
For $m>n\geq 1$, there exists a continuous odd surjection $\psi_{m,n}\colon B^n\rightarrow B^m$ such that $\psi_{m,n}(S^{n-1})\cap \{\vec{0}\}=\emptyset$.
\end{theorem}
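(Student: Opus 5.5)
It suffices to handle $n=1$. If $\psi_{m,1}\colon B^1\to B^m$ is a continuous odd surjection with $\psi_{m,1}(\pm1)\neq\vec 0$, then for $n\ge 2$ we can precompose it with a continuous odd surjection $\pi\colon B^n\to B^1$ satisfying $\pi(S^{n-1})\subseteq\{\pm1\}$. The composite $\psi_{m,1}\circ\pi$ is then a continuous odd surjection that avoids $\vec 0$ on $S^{n-1}$.

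A natural choice is a radial map: write $x=r\theta$ with $r\in[0,1]$ and $\theta\in S^{n-1}$, and set $\pi(x)=\phi(r)\,\theta_1$. Here $\phi$ would have to equal $\pm1$ on the boundary, which is incompatible with $\theta_1$ ranging over all of $[-1,1]$. So this guess is wrong and I would instead lift the problem one dimension. The cleaner route is to reduce to a construction of the type in~\cite[Theorem~E]{lim2023gromov}, as follows.

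\textbf{Step 1: a continuous odd surjection $B^1\to B^m$.} Take a continuous surjection $\gamma\colon[0,1]\to B^m$ (a space-filling curve composed with a homeomorphism from the cube onto the ball) with $\gamma(0)=\vec 0$ and $\gamma(1)=p\neq\vec 0$. Define $\psi(t)=\gamma(t)$ for $t\in[0,1]$ and $\psi(t)=-\gamma(-t)$ for $t\in[-1,0]$. The two halves agree at $t=0$ because $\gamma(0)=\vec 0$, so $\psi$ is continuous and odd. It is surjective because $\gamma$ already is. Finally $\psi(\pm1)=\pm p\neq\vec 0$.

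\textbf{Step 2: from $B^1$ to $B^n$.} I need a continuous odd surjection $\pi\colon B^n\to B^1$ with $\pi(S^{n-1})\cap\{0\}=\emptyset$. A continuous map sending $S^{n-1}$ into $\{\pm1\}$ and commuting with the antipodal map is impossible when $n\ge2$, since $S^{n-1}$ is connected and $\pi$ would be constant on it, contradicting oddness. So the factorization through $B^1$ fails, and I would instead go through $S^{n-1}$ directly. First build an odd continuous surjection $\sigma\colon S^{n-1}\to B^m\setminus\{\text{small open ball around }\vec 0\}$, then fill in radially.

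To build $\sigma$, compose the odd map $\theta\mapsto\theta_1\in[-1,1]$ (continuous, odd, surjective) with $\psi$ from Step 1. This gives the odd surjection $\psi\circ\theta_1\colon S^{n-1}\to B^m$, but it hits $\vec 0$ at $\theta_1=0$. To fix this, modify $\gamma$ so that $\gamma^{-1}(\vec 0)$ contains no neighborhood issue. Instead, define on $B^n$
\[\Psi(r\theta)=\psi\bigl(\lambda(r,\theta)\bigr),\]
where $\lambda\colon B^n\to[-1,1]$ is odd, continuous and surjective, and $|\lambda|$ is bounded away from the zeros of $\psi$ on $S^{n-1}$. The main obstacle is exactly this: an odd continuous real function on the connected set $S^{n-1}$ must vanish somewhere, by the intermediate value theorem. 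Hence I would require instead that $\psi$ be \emph{nonzero on a neighborhood of $0$ except at $0$}, and handle the boundary by a two-stage map. On the sphere I would use an odd continuous map $S^{n-1}\to S^{m-1}$ composed with radius $1$; for $n\ge2$ such a map can be obtained from $S^1\to S^{m-1}$, an odd loop through an odd space-filling-type curve as in~\cite[Theorem~E]{lim2023gromov}, precomposed with an odd map $S^{n-1}\to S^1$ such as $\theta\mapsto(\theta_1,\theta_2)/|(\theta_1,\theta_2)|$, suitably regularized where $\theta_1=\theta_2=0$. That regularization is the delicate point. In the interior I would place the surjection $\Psi$ from the radial shells $r\le 1/2$, and interpolate linearly on $1/2\le r\le 1$ between $\Psi|_{r=1/2}$ and the boundary map, keeping the whole construction odd.
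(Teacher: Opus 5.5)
\textbf{Overall.} Your final architecture is sound, and it is in fact the paper's. On the inner ball $r\le\frac12$ you place an odd continuous surjection $B^n\to B^m$. Your $\Psi=\psi\circ\lambda$ with $\lambda(x)=x_1$ plays the role of the paper's $f_m^{-1}\circ s_m\circ\pi\circ f_n$; your Step~1 reflects a space-filling curve onto $B^m$ directly instead of inducting on the cube, which is fine. On the shell $\frac12\le r\le 1$ you interpolate oddly to an odd map $b\colon S^{n-1}\to S^{m-1}$. This works: linear interpolation between two odd maps is odd, stays in $B^m$ by convexity, and ends at $b(\theta)\neq\vec 0$. Step~1 alone also settles $n=1$.

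\textbf{The gap.} The problem is how you propose to obtain $b$. For $n\ge 3$ there is \emph{no} continuous odd map $S^{n-1}\to S^1$. By the Borsuk--Ulam theorem there is no odd continuous map $S^k\to S^j$ for $k>j$, and here $k=n-1\ge 2>1$. So $\theta\mapsto(\theta_1,\theta_2)/\lvert(\theta_1,\theta_2)\rvert$ cannot be ``suitably regularized'' on $\{\theta_1=\theta_2=0\}$. The step you flag as delicate is impossible, and the factorization $S^{n-1}\to S^1\to S^{m-1}$ fails for every $n\ge 3$. (For $n=2$ that map is just the identity of $S^1$, so no regularization is needed there.)

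\textbf{The missing observation.} The boundary map $b$ need not be surjective or built from space-filling curves at all, because surjectivity is already supplied by the inner ball. Since $m>n$, the standard inclusion $\iota_{m,n}(u)=(u,0,\ldots,0)$ is an odd continuous map $S^{n-1}\to S^{m-1}$. Taking $b=\iota_{m,n}$ completes your argument. This is exactly what the paper does: it uses the odd homotopy $\frac12\bigl(H'(u,t)-H'(-u,t)\bigr)$ from $\psi'_{m,n}|_{S^{n-1}}$ to $\iota_{m,n}$, and the straight-line homotopy is already odd.

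The earlier attempts in your write-up are dead ends, as you noticed: reducing to $n=1$ via $\pi(S^{n-1})\subseteq\{\pm1\}$, and asking $\lambda$ to avoid $0$ on $S^{n-1}$. Both fail for the same connectivity/intermediate-value reason and should be removed.
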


With this, the proof of Theorem~\ref{thrm:upperbound} now follows:

\begin{proof}[Proof of Theorem~\ref{thrm:upperbound}]
Let $\psi_{m,n}\colon B^n\rightarrow B^m$ be the map given in Theorem~\ref{thrm:lowtohigh}.
Recall that the graph of a surjection is a correspondence and let $C_{m,n}\coloneqq \graph(\psi_{m,n})$.
It suffices to show $\dis(C_{m,n})=\dis(\psi_{m,n})<2$.

Since $\psi_{m,n}$ is continuous and $B^n$ is compact, the supremum in the definition of distortion is attained,
\[\dis(\psi_{m,n})=\sup\limits_{x,x'\in B^n}\lvert d_{B^n}(x,x')-d_{B^m}(\psi_{m,n}(x),\psi_{m,n}(x'))\rvert.\]
Let $x_0,x'_0\in B^n$ attain the supremum above.
It must be that $x_0\neq x'_0$, since $x_0=x_0'$ would imply $d_\gh(B^m,B^n)\leq \frac{1}{2}\dis(C_{m,n})=\frac{1}{2}\dis(\psi_{m,n})=0$, i.e.\ $d_\gh(B^m,B^n)=0$ so $B^m$ and $B^n$ are isometric, which is a contradiction (for example by Theorem~\ref{thm:general}) since $m\neq n$.
There are two cases.

\emph{Case 1: $x'_0\neq -x_0$:} 
In this case,
$0<d_{B^n}(x_0,x'_0)<2$ 
and 
$0\leq d_{B^m}(\psi_{m,n}(x_0),\psi_{m,n}(x'_0))\leq 2$.
Thus, $\lvert d_{B^n}(x_0,x'_0)-d_{B^m}(\psi_{m,n}(x_0),\psi_{m,n}(x'_0))\rvert <2$.

\emph{Case 2: $x'_0= -x_0$:} 
Since $x_0\neq x_0'$, we have $x_0\neq 0$.
In this case, we can write $d_{B^n}(x_0,x'_0)= 2\lVert x_0\rVert_{B^n}$ and $d_{B^m}(\psi_{m,n}(x_0),\psi_{m,n}(x'_0))=2\lVert\psi_{m,n}(x_0)\rVert_{B^m}$.
Therefore, $\dis(\psi_{m,n})=2\lvert \lVert x_0\rVert_{B^n}-\lVert\psi_{m,n}(x_0)\rVert_{B^m}\rvert$.
If $0<\lVert x_0\rVert_{B^n}<1$, then $0\leq \lVert\psi_{m,n}(x_0)\rVert_{B^m}\leq 1$ gives $\dis(\psi_{m,n})<2$, and if $\lVert x_0\rVert_{B^n}=1$, then $0< \lVert\psi_{m,n}(x_0)\rVert_{B^m}\leq 1$ gives $\dis(\psi_{m,n})<2$.
\end{proof}

\section{Conclusion and open questions}
\label{sec:conclusion}

We have investigated the question of determining the Gromov--Hausdorff distance $d_\gh(B^m,B^n)$ between Euclidean unit balls of different dimensions, a problem where the stability of persistent homology provides no positive lower bounds.
To address this, we use classic tools from algebraic topology.
Our first approach in Section~\ref{sec:invOFdim} adapts the classical `Invariance of Dimension' proof to yield the universal lower bound $d_\gh(B^m,B^n) \geq \frac{1}{8}$ for all $m>n\geq 1$.
Our approach in Section~\ref{secBorsukUlam} employs the Borsuk--Ulam theorem via antipode-preserving triangulations to obtain a dimension-dependent lower bound $d_\gh(B^m,B^n) \geq \tfrac{1}{1+\alpha_n}> \frac{1}{2}$.
We establish the bound $d_\gh(B^m,B^n)\geq 1-\cov_{B^n}(m)$, which dictates that as $m\rightarrow \infty$, the covering radius vanishes and the distance approaches $1$.
Additionally, we show that the Gromov--Hausdorff distance between Euclidean unit balls of different dimensions is always strictly less than $1$.

We end with some open questions.

\begin{question}
For a fixed dimension $n\geq 1$, is $d_\gh(B^m,B^n)$ a nondecreasing function of $m>n$?

See also~\cite[Question~I]{lim2023gromov}.
\end{question}

\begin{question}
Our lower bound in Theorem~\ref{thm:general} surpasses the covering radius bound from Proposition~\ref{prop:covering} for some $m$ and $n$ (see Figure~\ref{fig:Prop4vsThrm2}).
But it depends only on $n$, and not also on $m$.
Hence it does not capture the intuition that the geometric dissimilarity should increase with the dimensional difference between the spaces.
Can one establish an improved version of Theorem~\ref{thm:general} that is a function of both $m$ and $n$?
\end{question}

\begin{question}
How do $d_\gh(B^m,B^n)$ and $d_\gh(S^{m-1},S^{n-1})$ relate for $m>n$, where the spheres are equipped with the restriction of the Euclidean metric rather than the geodesic metric (see~\cite{lim2023gromov,GH-BU-VR,harrison2023quantitative,rodriguez2026some})? 
For a map $f\colon S^{m-1} \rightarrow S^{n-1}$, consider the radial extension $\bar{f}\colon B^m \rightarrow B^n$ defined as $\bar{f}(p)= \|p\| f(\frac{p}{\|p\|})$ for $p\neq 0$ and $\bar{f}(0)=0$; do connections between $\dis(f)$ and $\dis(\bar{f})$ yield a relationship between these Gromov--Hausdorff distances?

\end{question}

\begin{question}

Can the methods developed in this paper be extended to bound $d_\gh(B^m_p, B^n_p)$ for $\ell^p$ unit balls 
\[B^n_p = \left\{x \in \R^n : \left(\sum_{i=1}^n |x_i|^p\right)^{1/p} \leq 1\right\}\]
for $1\le p \le \infty$?
Or to ellipsoids $\{x \in \R^n : x^T A x \leq 1\}$ for positive definite matrices $A \neq I$? 
Since these sets are centrally symmetric ($x \in B$ implies $-x \in B$), what do Borsuk--Ulam approaches yield in this setting?
\end{question}

\begin{question}
More generally, how do we determine the Gromov--Hausdorff distance between polynomially defined convex bodies of different dimensions or with different defining parameters?
\end{question}

\bibliographystyle{plain}
\bibliography{GHdistanceBetweenBalls-Final/GHdistancesBalls}

\newpage

\appendix

\section{The exact value of $\alpha_n$}
\label{app:proofoflemmageneral}

This appendix gives the proof of Lemma~\ref{lem:algebraic-general}, which states the following.
For $n\ge 1$, recall $\alpha_n=\sup \{d(X,Y) \mid X,Y\subseteq \R^n \text{ are finite},\ \diam(X),\diam(Y)\leq 1,\, \conv(X)\cap\conv(Y) \neq \emptyset\}$.
Let $p=\lfloor\frac n2\rfloor$ and $q=\lceil\frac n2\rceil$.
Then, 
$\alpha_n^2=\frac12\left(\frac{p}{p+1}+\frac{q}{q+1}\right)$.
Equivalently,
\[\alpha_n=
\begin{cases}
 \sqrt{\frac{n}{n+2}},&n\text{ even},\\
 \sqrt{\frac{n^2+2n-1}{(n+1)(n+3)}},&n\text{ odd}.
\end{cases}\]

\begin{proof}[Proof of Lemma~\ref{lem:algebraic-general}]
Let $X,Y\subseteq\R^n$ be finite sets as above and let $z$ be a vertex of the nonempty convex intersection, $K=\conv(X)\cap\conv(Y)$.
Choose subsets $X_0=\{x_0,\ldots,x_r\}\subseteq X$ and $Y_0=\{y_0,\ldots,y_s\}\subseteq Y$ of minimal cardinality such that $z\in\conv(X_0)\cap\conv(Y_0)$.
By minimality, $X_0$ and $Y_0$ are affinely independent, and all
coefficients in the convex representation of $z$ using these points
are nonzero.
Thus,
\[z=\sum_{i=0}^r a_i x_i
=\sum_{j=0}^s b_j y_j
\quad\text{with}\quad 
a_i,b_j>0
\quad\text{and}\quad 
\sum_{i=0}^r a_i=\sum_{j=0}^s b_j=1.\]
Set $U=\operatorname{span}\{x_i-z:0\leq i\leq r\}$ and $V=\operatorname{span}\{y_j-z:0\leq j\leq s\}$.
We claim that $U\cap V=\{0\}$.
Suppose that $0\neq w\in U\cap V$. 
Since $z$ lies in the relative interior of $\conv(X_0)$ and $\conv(Y_0)$, for sufficiently small $\varepsilon>0$, one has $z\pm\varepsilon w\in\conv(X_0)\cap\conv(Y_0)\subseteq K$.
This contradicts the fact that $z$ is vertex of $K$.
Therefore $r+s=\dim U+\dim V=\dim(U+V)\leq n$.

Since $\|x_i-y_j\|\geq d(X,Y)$, we have
\begin{align*}
d(X,Y)^2 &\leq \sum_{i=0}^r\sum_{j=0}^s a_i b_j\|x_i -y_j\|^2 \\
&= \sum_{i,j}a_i b_j\|(x_i-z)-(y_j-z)\|^2 \nonumber \\
&=\sum_{i,j}a_i b_j \Big[\|x_i-z\|^2+ \|y_j-z\|^2-2\langle x_i-z,y_j-z \rangle\Big] \nonumber \\
&= \sum_{i,j}a_i b_j \|x_i-z\|^2+\sum_{i,j}a_i b_j \|y_j-z\|^2-2\sum_{i,j}a_i b_j\langle x_i-z,y_j-z \rangle \nonumber \\
&= \sum_{i}a_i \|x_i-z\|^2+\sum_{j} b_j \|y_j-z\|^2-2\left\langle \sum_{i}a_i (x_i-z), \sum_{j} b_j (y_j-z)\right\rangle \nonumber \\
&= \sum_{i}a_i\|x_i-z\|^2+\sum_{j} b_j \|y_j-z\|^2
\end{align*}
where the last step follows since $\sum_{i} a_i x_i=z=\sum_{j} b_j y_j$ implies 
$\sum_{i} a_i (x_i-z)=0=\sum_{j} b_j (y_j-z)$.

Expanding
$x_i-x_k=(x_i-z)-(x_k-z)$ gives
\begin{align*}
\sum_{i,k}a_i a_k\|x_i-x_k\|^2
&=\sum_{i,k}a_i a_k
  \bigl(\|x_i-z\|^2+\|x_k-z\|^2
        -2\langle x_i-z,x_k-z\rangle\bigr)=2\sum_i a_i\|x_i-z\|^2.
\end{align*}
Since $\diam(X_0)\leq 1$,
\[
\sum_i a_i\|x_i-z\|^2
= \frac12\sum_{i,k}a_i a_k\|x_i-x_k\|^2
\leq \frac12\sum_{i\neq k}a_i a_k
= \frac12\left(1-\sum_i a_i^2\right)
\leq \frac12\left(1-\frac1{r+1}\right)
= \frac{r}{2(r+1)}
\]
where the last inequality is obtained by applying Cauchy--Schwarz to the vectors $(a_0,\ldots,a_r),(1,\ldots,1)\in \R^{r+1}$.
Similarly, $\sum_j b_j\|y_j-z\|^2 \leq \frac{s}{2(s+1)}$.

Combining, we get
\[d(X,Y)^2 \leq \frac{r}{2(r+1)}+\frac{s}{2(s+1)}.\]
Since $r+s\leq n$, the right-hand side is maximized when
$r+s=n$ and $|r-s|\leq1$.
Thus
$d(X,Y)^2 \leq \frac12\left(\frac{p}{p+1}+\frac{q}{q+1} \right)$.
This proves $\alpha_n^2 \leq \frac12\left(\frac{p}{p+1}+\frac{q}{q+1}\right).$

To prove sharpness, choose an orthogonal decomposition
$\R^n=M\oplus N$ with $\dim M=p$ and $\dim N=q$.
Let $X=\{x_0,\ldots,x_{p}\}\subseteq M$ be the vertex set of a regular $p$-simplex of side
length $1$ centered at the origin, and let $Y=\{y_0,\ldots,y_{q}\}\subseteq N$ be the vertex set
of a regular $q$-simplex of side length $1$ centered at the origin.  
Then $0\in\conv(X)\cap\conv(Y)$.
These vertex sets satisfy
\[\|x_i\|^2=\frac{p}{2(p+1)}
\quad\text{and}\quad
\|y_i\|^2=\frac{q}{2(q+1)}
\quad\text{for all }i.\]
Indeed, by the regularity of the simplex and its centered arrangement, the value of $\|x_i\|^2$ is constant for all $i$, and the value of $\langle x_i,x_j\rangle$ is constant for all $i\neq j$.
For $i\neq j$,
\begin{equation}
\label{eq:1}
1=\|x_i-x_j\|^2\\
=\|x_i\|^2+ \|x_j\|^2-2\langle x_i,x_j\rangle\\
= 2\|x_i\|^2-2\langle x_i,x_j\rangle
\end{equation}
Taking the inner product of the sum $\sum\limits_{i=0}^{p}x_i=\vec{0}$ with a specific vertex $x_j$ yields:
\begin{equation}
\label{eq:2}
0=\left\langle \sum\limits_{i=0}^{p}x_i, x_j\right\rangle\\
=\sum\limits_{i=0}^{p}\langle x_i, x_j\rangle\\
=\langle x_j, x_j\rangle + \sum\limits_{i\neq j}\langle x_i, x_j\rangle \\
= \|x_j\|^2 + p\langle x_i, x_j\rangle.
\end{equation}
Combining~\eqref{eq:1} and~\eqref{eq:2} gives $\|x_j\|^2=\frac{p}{2(p+1)}$ for every $j$.

Since $M$ and $N$ are orthogonal,
$\|x-y\|^2 = \|x\|^2+\|y\|^2 = \frac12\left(\frac{p}{p+1}+\frac{q}{q+1}\right)$
for every $(x,y)\in X\times Y$.  
Hence,  $\alpha_n^2 = \frac12\left(\frac{p}{p+1}+\frac{q}{q+1}\right).$
If $n$ is even, then $p=q=\frac{n}{2}$, giving
$\alpha_n^2=\frac{n}{n+2}.$
If $n$ is odd, then $p=\frac{n-1}{2}$ and $q=\frac{n+1}{2}$, giving
$\alpha_n^2= \frac{n^2+2n-1}{(n+1)(n+3)}.$
\end{proof}

\section{Proof of Theorem~\ref{thrm:lowtohigh}: Odd continuous surjections using space-filling curves}
\label{app:oddsctssurjective}

We use the following well-known result on space-filling curves by Peano~\cite{peano1890courbe}.

\begin{theorem}
\label{thrm:spacefilling}
There exists a continuous surjection $H\colon [0,1] \to [0,1]^2.$
\end{theorem}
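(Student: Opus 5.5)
I would prove this via Lebesgue's construction through the Cantor set. It avoids the bookkeeping of Hilbert's recursive subdivisions, and the only analytic work is a single uniform-continuity estimate. Let $\mathcal C\subseteq[0,1]$ be the middle-thirds Cantor set. Every $t\in\mathcal C$ has a unique ternary expansion $t=\sum_{k\ge1}2\epsilon_k 3^{-k}$ with $\epsilon_k\in\{0,1\}$. Define $G\colon \mathcal C\to[0,1]^2$ by sending the odd-indexed digits to the first coordinate and the even-indexed digits to the second:
\[G(t)=\Bigl(\sum_{k\ge1}\epsilon_{2k-1}2^{-k},\ \sum_{k\ge1}\epsilon_{2k}2^{-k}\Bigr).\]

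\emph{Surjectivity of $G$.} Any $(u,v)\in[0,1]^2$ has binary expansions $u=\sum u_k2^{-k}$ and $v=\sum v_k2^{-k}$. I interleave them as $\epsilon_{2k-1}=u_k$, $\epsilon_{2k}=v_k$ and take $t=\sum 2\epsilon_k3^{-k}\in\mathcal C$. Non-uniqueness of binary expansions does no harm here, since we only need some preimage.

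\emph{Continuity of $G$.} If $s,t\in\mathcal C$ satisfy $|s-t|<3^{-2N}$, then their first $2N$ ternary digits agree. Indeed, points of $\mathcal C$ whose expansions differ at digit $j\le 2N$ are separated by a removed middle third of length at least $3^{-j}$. Hence both coordinates of $G(s)$ and $G(t)$ differ by at most $2^{-N}$. So $G$ is uniformly continuous on the compact set $\mathcal C$.

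Next, extend $G$ to $H\colon[0,1]\to[0,1]^2$. The complement $[0,1]\setminus\mathcal C$ is a countable union of disjoint open intervals $(a,b)$ with $a,b\in\mathcal C$. On each such interval, let $H$ interpolate linearly between $G(a)$ and $G(b)$; on $\mathcal C$, set $H=G$. Convexity of $[0,1]^2$ keeps $H$ inside the square, and surjectivity is inherited from $G$.

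The main obstacle is continuity of $H$ at points of $\mathcal C$ that are limits of gap intervals. Continuity inside gaps is clear, and at a gap endpoint approached from inside that same gap it is clear as well. For the remaining case, take $t\in\mathcal C$ and points $s\to t$ that lie in gaps $(a,b)$ different from any gap having $t$ as an endpoint. Then both $a$ and $b$ lie between $t$ and $s$, or at least within distance $|s-t|$ of $t$. Since $H(s)$ is a convex combination of $G(a)$ and $G(b)$, we get
\[\|H(s)-G(t)\|\le\max\{\|G(a)-G(t)\|,\|G(b)-G(t)\|\},\]
and this tends to $0$ by the uniform continuity of $G$ established above. Writing this out with the explicit modulus from the digit-agreement estimate completes the proof.
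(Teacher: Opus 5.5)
The paper does not prove this statement; it only cites Peano. A self-contained Lebesgue-type construction is therefore a reasonable route. Your definition of $G$, its surjectivity, the digit-agreement estimate, and the linear interpolation are all correct.

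The gap is in the step you single out as the main obstacle. Since $s\in(a,b)$, the point $s$ lies strictly between $a$ and $b$, so $a$ and $b$ can never both lie between $t$ and $s$. Only the endpoint nearer to $t$ does. The far endpoint is at distance strictly greater than $|s-t|$ from $t$, possibly much greater.

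For a concrete failure, take $t=\tfrac{2}{3}+2\cdot 3^{-k}\in\mathcal{C}$ with $k\ge 3$ odd, and $s\in(\tfrac{1}{3},\tfrac{2}{3})$ just below $\tfrac{2}{3}$. Then $|s-t|$ is barely more than $2\cdot 3^{-k}$. But $G(\tfrac{1}{3})=(\tfrac12,1)$ while $G(t)=(\tfrac12+2^{-(k+1)/2},0)$, so your bound $\max\{\|G(a)-G(t)\|,\|G(b)-G(t)\|\}$ is about $1$. Here $H(s)$ itself is close to $G(t)$, because $s$ is near $b=\tfrac23$; only the bound fails. Since $k$ is arbitrary, no modulus depending only on $|s-t|$ controls this bound. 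So your final sentence, finishing with the explicit modulus from the digit-agreement estimate, cannot be carried out as stated.

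The conclusion is still true. The missing idea is to trap the whole gap between $t$ and a nearby Cantor point, rather than between $t$ and $s$.

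\begin{itemize}
\item Fix $t\in\mathcal{C}$ and $\varepsilon>0$, choose $\delta>0$ from the uniform continuity of $G$, and consider $s>t$ (the left side is symmetric).
\item If $t$ is the left endpoint of a gap, points slightly to the right of $t$ lie in that gap, a case you already handled.
\item Otherwise $t$ is a limit from the right of points of $\mathcal{C}$, so there is $t'\in\mathcal{C}\cap(t,t+\delta)$.
\item Points of $\mathcal{C}$ in $(t,t')$ are handled directly by $\delta$.
\item For $s\in(t,t')\setminus\mathcal{C}$, the gap $(a,b)$ containing $s$ contains neither $t$ nor $t'$, so $t\le a<b\le t'$. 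Thus $a,b\in\mathcal{C}$ lie within $\delta$ of $t$, and convexity gives $\|H(s)-G(t)\|\le\max\{\|G(a)-G(t)\|,\|G(b)-G(t)\|\}<\varepsilon$.
\end{itemize}

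Equivalently: only finitely many gaps have length at least any given $\eta>0$, and each gap not having $t$ as an endpoint lies at positive distance from $t$. Hence the gaps met as $s\to t$ have lengths tending to $0$, and both endpoints tend to $t$. With this repair your argument is a complete proof.
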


We now prove Theorem~\ref{thrm:lowtohigh}, which states that for $m>n\geq 1$, there exists a continuous odd surjection $\psi_{m,n}\colon B^n\rightarrow B^m$ such that $\psi_{m,n}(S^{n-1})\cap \{\vec{0}\}=\emptyset$.

\begin{proof}[Proof of Theorem~\ref{thrm:lowtohigh}]

We divide this proof into four parts.

\textbf{(i) Construction of a continuous odd surjection $s_2\colon [-1,1]\rightarrow [-1,1]^2$.} 
Let $H\colon [0,1]\to[0,1]^2$ be a variant of the continuous surjection from Theorem~\ref{thrm:spacefilling} parameterized so that the initial point maps to the midpoint of the lower boundary: $H(0)=(\frac{1}{2},0)$.
Define a continuous bijection $T\colon[0,1]^2\rightarrow [-1,1]\times [0,1]$ by $T(u,v)=(2u-1,v)$.
Note $T(\frac{1}{2},0)=(0,0)$.

Define $\phi_{\half}\colon [0,1]\rightarrow[-1,1]\times [0,1]$ by $\phi_{\half}(t)=(T\circ H)(t)$.
As the composition of two continuous surjections, $\phi_{\half}$ is continuous and surjective onto $[-1,1]\times [0,1]$, and satisfies $\phi_{\half}(0)=(0,0)$.
We obtain the desired continuous odd surjection $s_2\colon [-1,1]\rightarrow [-1,1]^2$ by reflecting $\phi_{\half}$ onto the negative interval $[-1,0)$:
\[s_2(t)=\begin{cases}
    \phi_{\half}(t)  & \text{if } t \geq 0 \\
    -\phi_{\half}(-t)  & \text{if } t< 0
\end{cases}\]

\textbf{(ii) Inductive extension to arbitrary dimensions.}
We prove that for any $k\geq 2$, there exists a continuous odd surjection $s_k\colon [-1,1]\rightarrow [-1,1]^k$.
We proceed by induction. 
The base case $k=2$ is true by (i); let $s_2(t)=(p_1(t),p_2(t))$.
Assume a continuous odd surjection $s_k\colon [-1,1]\rightarrow [-1,1]^k$ exists.
Construct $s_{k+1}\colon [-1,1]\rightarrow [-1,1]^{k+1}$ via
$
s_{k+1}(t)=\left(p_1(t),s_k(p_2(t))\right).
$
Since $s_2$ and $s_k$ are each continuous odd surjections, it follows that $s_{k+1}$ is also a continuous odd surjection.

\textbf{(iii) Construction of a continuous odd surjection $[-1,1]^n\rightarrow [-1,1]^m$ for $m>n\geq 1$.}
For $m>n\geq 1$, let $s_m\colon [-1,1]\rightarrow [-1,1]^m$ be the continuous odd surjection from (ii). 
Define the surjective projection map $\pi\colon [-1,1]^n\rightarrow [-1,1]$ by $\pi(u_1,u_2,\ldots,u_n)=u_1$, which is continuous and odd.
Now we can define the continuous odd surjection $g\colon [-1,1]^n \rightarrow [-1,1]^m$ as the composition $g=s_m\circ \pi$.

\textbf{(iv) Euclidean balls and the boundary condition.}
Consider odd homeomorphisms $f_n\colon B^n\rightarrow [-1,1]^n$ and $f_m^{-1}\colon [-1,1]^m\rightarrow B^m$.
The map $\psi'_{m,n}=f_m^{-1}\circ g \circ f_n$ is now a continuous odd surjection $B^n \to B^m$ for $m>n\geq 1$.

Since $B^m$ is contractible, the set $[S^{n-1},B^m]$ of (unpointed) homotopy classes of continuous maps from $S^{n-1}$ to $B^m$ consists of a single homotopy class. 
Therefore, the restriction $\psi'_{m,n}\vert_{S^{n-1}}$ is homotopic to the standard inclusion $\iota_{m,n}\colon S^{n-1}\rightarrow B^m$, where $\iota(u_1,\ldots, u_n)=(u_1,\ldots, u_n, 0,\ldots,0)$.
Let $H': S^{n-1}\times I \to B^m$ be such a homotopy.
We can use $H'$ to construct an odd homotopy $H(u,t)=\frac{1}{2}\bigl(H'(u,t)-H'(-u,t)\bigr)$ with $H(u,0)=\psi'_{m,n}(u)$ and $H(u,1)=\iota_{m,n}(u)$.

Now, for $x=ru$, where $u\in S^{n-1}$ and $0\leq r\leq 1$, define
\[\psi_{m,n}(ru)=
\begin{cases}
    \psi'_{m,n}(2ru), &0\leq r\leq \frac{1}{2},\\
    H(u, 2r-1), & \frac{1}{2}\leq r \leq 1.
\end{cases}
\]
Note that $\psi_{m,n}\colon B^n\rightarrow B^m$ is a continuous odd surjection with $\psi_{m,n}|_{S^{n-1}}=\iota_{m,n}$, and hence $\psi_{m,n}(S^{n-1})\cap \{\vec{0}\}=\emptyset$.
\end{proof}

\section{Using curvature sets of balls to lower bound $d_\gh(B^m,B^n)$}
\label{sec:balls-curvature-sets}

We now explore an alternative approach to lower bounding $d_\gh(B^m, B^n)$ using the machinery of curvature sets, which were originally introduced by Gromov~\cite{gromov2007metric}.
For spheres $S^1$ and $S^2$ with geodesic metric, this approach yields $d_\gh(S^1,S^2)\geq \frac{\pi}{12}$ in~\cite{memoliGH}, with better bounds later obtained using different methods in~\cite{lim2023gromov}.
We investigate a similar technique with unit balls; the bounds we obtain here are weaker than those established in previous sections.

We begin by reviewing the necessary background on curvature sets and their relationship to the Gromov--Hausdorff distance.
Let $\mathcal{M}$ denote the collection of all compact metric spaces.

\begin{definition}[Curvature Sets]
\label{def:curvature-sets}
Let $(X,d) \in \mathcal{M}$ be a compact metric space and let $n \in \N$.
Let $\Psi^{(n)}_X\colon X^n \rightarrow \R^{n\times n}$ be the matrix-valued map $(x_1, \ldots, x_n)\mapsto ((d(x_i,x_j)))_{i,j=1}^n$.
Then, the \emph{$n$-th curvature set} of $X$ is
\[K_n(X)\coloneqq\{\Psi^{(n)}_X(x_1, \ldots , x_n)~:~(x_1, \ldots , x_n)\in X^n \}.\]

\end{definition}
Curvature sets contain all $n\times n$ distance matrices from ordered $n$-tuples of $X$, with repetitions allowed.

\begin{example}[Example of distance matrices]
\label{ex:distancematrices}
Let $M_{n+2}\in K_{n+2}(B^{n+1})$ be the distance matrix of the $n+2$ vertices of a regular $(n+1)$-simplex inscribed in $B^{n+1}$.
The diameter of the regular inscribed $(n+1)$-simplex in $S^n \subseteq B^{n+1}$ is $\sqrt\frac{2(n+2)}{n+1}$ (see for example~\cite{lovasz1983self}).
Two examples of $M_{n+2}\in K_{n+2}(B^{n+1})$ for $n=1,2$ are:
\[
  M_3=
  \left[ {\begin{array}{ccc}
   0 & \sqrt{3} & \sqrt{3}\\
   \sqrt{3} & 0 & \sqrt{3}\\
   \sqrt{3} & \sqrt{3} & 0\\
  \end{array} } \right]
  \quad
  M_4=
  \left[ {\begin{array}{cccc}
   0 & \sqrt{\frac{8}{3}} & \sqrt{\frac{8}{3}} & \sqrt{\frac{8}{3}} \\
   \sqrt{\frac{8}{3}} & 0 & \sqrt{\frac{8}{3}} & \sqrt{\frac{8}{3}}\\
   \sqrt{\frac{8}{3}} & \sqrt{\frac{8}{3}} & 0 & \sqrt{\frac{8}{3}}\\
 \sqrt{\frac{8}{3}} & \sqrt{\frac{8}{3}} & \sqrt{\frac{8}{3}} & 0\\
  \end{array} } \right]
  .
\]

\end{example}


Curvature sets $K_n(X)$ and $K_n(Y)$ are \emph{isometric invariants} of compact metric spaces $X$ and $Y$.
In fact, they are compact subsets of $\R^{n \times n}$, and so the Hausdorff distance between them is finite.
Let $\mathrm{Sym}_{k}^+$ denote the set of all symmetric $k \times k$
matrices with non-negative entries and zero diagonal, with metric $d_{\mathrm{Sym}_{k}^+}(A,B)=\max\limits_{i,j}\lvert a_{ij}-b_{ij}\rvert$ for $A=((a_{ij}))$ and $B=((b_{ij}))$ in $\mathrm{Sym}_{k}^+$.
One can relate the Gromov--Hausdorff distance to the Hausdorff distance between curvature sets for all $k\in\N$ as follows (see~\cite{memoliGH}):
\begin{equation}
\label{ineq:GH-curvature}
d_\gh(X,Y)\geq  \tfrac{1}{2}d_{\h}^{\mathrm{Sym}_{k}^+}(K_k(X),K_k(Y)), 
\end{equation}
where $d_{\h}^{\mathrm{Sym}_{k}^+}$ denotes the Hausdorff distance in $\mathrm{Sym}_{k}^+$.

In Theorem~\ref{thm:general} we prove that for every $m>n\ge 1$, we have $d_\gh(B^m,B^n) \ge \tfrac{1}{1+\alpha_n}$.
This provides stronger lower bounds, at least in the case $m=n+1$ for $n=1,2$, than the lower bound \emph{we were able to obtain} using curvature sets; see the following two remarks.
(It is conceivable that one could find improved lower bounds using curvature sets.)

Our strategy is to lower bound $\sup\limits_{i\in \N} d_\h(K_i(B^{n+1}),K_i(B^n))$ 
and then use~\eqref{ineq:GH-curvature} to obtain a lower bound on $d_{\gh}(B^{n+1},B^n)$.
We focus on $i=n+2$, the smallest $i$ such that $i$ points in $(n+1)$-dimensional space might not have a distance matrix arising from points in $n$-dimensional space.
Among $(n+2)$-point configurations in $B^{n+1}$, we consider $M_{n+2}$ (see Example~\ref{ex:distancematrices}), the distance matrix of the vertex set of a regular $(n+1)$-simplex  inscribed in $B^{n+1}$,  as this configuration maximizes symmetry and cannot be isometrically embedded in $\R^n$.
Using the fact that $M_{n+2}\in K_{n+2}(B^{n+1})$ and~\eqref{ineq:GH-curvature}, we have the following chain of inequalities: 

\[d_\gh(B^{n+1},B^n)\ge \tfrac{1}{2}d_{\h}(K_{n+2}(B^{n+1}),K_{n+2}(B^n)) \ge \tfrac{1}{2}d(M_{n+2},K_{n+2}(B^n)).\]

We now apply this lower bound explicitly for the low-dimensional cases $n=1,2$.

\begin{remark}
For $n=1$, we minimize the distance from $M_3$ (the distance matrix of a regular $2$-simplex in $B^2$) to all possible distance matrices in $K_{3}(B^1)$.
Placing three ordered points on the interval $[-1,1]$ at locations $-1,0,1$ yields: 
\[d_\gh(B^2,B^1)\ge \tfrac{1}{2}d(M_3,K_3(B^1))=\tfrac{\sqrt3-1}{2}\approx0.366.\]
\end{remark}

\begin{remark}
For $n=2$, we conjecture that the configuration that minimizes the distance to $M_4$ places points at $(\pm c,0)$ and $(0,\pm c)$ in $B^2$ for $c=\frac{2\sqrt\frac{8}{3}}{2+\sqrt{2}} \approx0.9566$, which (if true) would yield that $d_\gh(B^3,B^2)$ is at least as large as
$\tfrac{1}{2}d(M_4,K_4(B^2)){=}\tfrac{1}{2}\lvert\sqrt{\tfrac{8}{3}}-2c\rvert \approx0.1401$.
\end{remark}

Whether alternative optimization strategies or refined geometric configurations could improve these bounds remains an interesting open question.

\begin{question}

Does $d_\h(K_{n+2}(B^{n+1}),K_{n+2}(B^n))=d(M_{n+2},K_{n+2}(B^n))$?
\end{question}

\begin{question}
\label{ques2:curvaturesets}
Does $d_\h(K_{n+2}(B^{n+1}),K_{n+2}(B^n))\rightarrow0$ as $n\rightarrow\infty$?
\end{question}

\end{document}